\title{Uniform $\mathcal{H}$-matrix Compression with Applications to Boundary Integral Equations\thanks{Submitted to the journal's Software, High-Performance Computing, and Computational Methods in Science and Engineering section May 29, 2024.
\funding{This work was supported by FWO-Flanders project G088622N.}}}
\author{Kobe Bruyninckx\thanks{Department of Computer Science, KU Leuven, Leuven, 3000
  (\email{kobe.bruyninckx@kuleuven.be}, \email{daan.huybrechs@kuleuven.be}, \email{karl.meerbergen@kuleuven.be}).}
\and Daan Huybrechs\footnotemark[2]
\and Karl Meerbergen\footnotemark[2]}
    \let\oldnl\nl 
    \newcommand{\nonl}{\renewcommand{\nl}{\let\nl\oldnl}} 
\pgfplotsset{compat=1.18}
\newcommand{\bigO}{\mathcal{O}} 
\newcommand{\R}{\mathbb{R}} 
\newcommand{\C}{\mathbb{C}} 
\newcommand{\norm}[1]{\left\|#1\right\|} 
\newcommand{\vu}{\mathbf{u}} 
\newcommand{\vv}{\mathbf{v}} 
\newcommand{\vw}{\mathbf{w}} 
\newcommand{\restr}[2]{{#1}_{|#2}} 
\newcommand{\Hm}{\mathcal{H}} 
\newcommand{\UHm}{\mathcal{UH}} 
\newcommand{\DHm}{\mathcal{DH}} 
\newcommand{\ct}{\tau} 
\newcommand{\cs}{\sigma} 
\newcommand{\ItJ}{{I\times J}} 
\newcommand{\ItI}{{I\times I}} 
\newcommand{\ts}{{\ct\times\cs}} 
\newcommand{\st}{{\cs\times\ct}} 
\newcommand{\tree}{\mathcal{T}} 
\newcommand{\leaves}{\mathcal{L}} 
\newcommand{\Lrc}{\mathcal{L}_I'} 
\newcommand{\Lcc}{\mathcal{L}_J'} 
\newcommand{\row}[1]{\mathrm{row}(#1)} 
\newcommand{\col}[1]{\mathrm{col}(#1)} 
\newcommand{\csp}{c_\mathrm{sp}} 
\newcommand{\kb}{k_b} 
\newcommand{\kmax}{k_{\max}} 
\newcommand{\lt}{\ell_\ct} 
\newcommand{\ls}{\ell_\cs} 
\newcommand{\lmax}{\ell_{\max}} 
\newcommand{\kt}{k_\ct^\mathrm{tot}} 
\newcommand{\ks}{k_\cs^\mathrm{tot}} 
\newcommand{\x}{\mathbf{x}} 
\newcommand{\y}{\mathbf{y}} 
\newcommand{\fspace}[1]{\mathscr{#1}} 
\renewcommand{\d}[1]{\ensuremath{\operatorname{d}\!{#1}}} 
\newcommand{\supp}[1]{\mathrm{supp}#1}
\newcommand{\svdU}{\mathbf{U}}
\newcommand{\svdV}{\mathbf{V}}
\newcommand{\svdS}{\mathbf{\Sigma}}
\DeclareMathOperator*{\argmin}{arg\,min}
\def\CC{{C\nolinebreak[4]\hspace{-.05em}\raisebox{.4ex}{\tiny\bf ++}}}
\crefname{algocf}{Algorithm}{Algorithms}
\Crefname{algocf}{Algorithm}{Algorithms}
\begin{document}
\maketitle

\begin{abstract}
Boundary integral equations lead to dense system matrices when discretized, yet they are data-sparse. Using the $\Hm$-matrix format, this sparsity is exploited to achieve $\bigO(N\log N)$ complexity for storage and multiplication by a vector. This is achieved purely algebraically, based on low-rank approximations of subblocks, and hence the format is also applicable to a wider range of problems. The $\Hm^2$-matrix format improves the complexity to $\bigO(N)$ by introducing a recursive structure onto subblocks on multiple levels. However, in many cases this comes with a large proportionality constant, making the $\Hm^2$-matrix format advantageous mostly for large problems. In this paper we investigate the usefulness of a matrix format that lies in between these two: Uniform $\Hm$-matrices. An algebraic compression algorithm is introduced to transform a regular $\Hm$-matrix into a uniform $\Hm$-matrix, which maintains the asymptotic complexity. Using examples of the BEM formulation of the Helmholtz equation, we show that this scheme lowers the storage requirement and execution time of the matrix-vector product without significantly impacting the construction time.
\end{abstract}

\begin{keywords}
hierarchical matrices, matrix compression, boundary integral equations, boundary element method, Helmholtz equation
\end{keywords}

\begin{MSCcodes}
35J05, 65F30, 65N38
\end{MSCcodes}

\section{Introduction}\label{sec:1_intro}

The storage requirements for a dense $N\times N$-matrix scale as $\bigO(N^2)$, which may become prohibitively large in applications. In some cases, such as finite element methods applied to partial differential equations (PDEs), the system matrix is naturally sparse in that only $\bigO(N)$ matrix entries are non-zero. In other applications matrices arise that are not sparse in the usual sense but still allow a data-sparse approximation. Examples are discretizations of integral equations or the solution operator, i.e.\ the inverse of a finite element matrix, for elliptic PDEs. Data-sparse hierarchical matrices ($\Hm$-matrices) are applicable in both cases \cite{Borm2005b,Bebendorf2008,Borm2010a}.

The hierarchical matrix framework, first introduced in \cite{Hackbusch1999}, allows for storage of and operations with matrices in $\bigO(N\log N)$ complexity. This is achieved by splitting the matrix in appropriate submatrices of which the majority can be approximately represented by matrices of low rank. The approximation is constructed algebraically from a subset of entries of the matrix, using so-called Adaptive Cross Approximation (ACA) and similar techniques, which are applicable in various cases.

For integral operators of the form
\begin{equation*}
    \begin{aligned}
        (\mathcal{A}u)(\x) = \int_{\Omega}{g(\x,\y)u(\y)\d s_\y}, && \Omega\subseteq \R^d
    \end{aligned}
\end{equation*}
similar techniques exist based on analytical expansions of the kernel function $g$. Originally, the Fast Multipole Method (FMM) \cite{Rokhlin1985,GreengardRokhlin1987,Greengard1988} and the panel clustering method \cite{HackbuschNowak1989,Sauter2000} used explicit expansions of $g$ which are kernel-specific. This drawback is removed in kernel-independent variants \cite{BormGrasedyck2004,YingBirosZorin2004,FongDarve2009,LetourneauCeckaDarve2014} but the utilized general expansions can become less efficient.

Improvement on the $\Hm$-matrix format have been developed in the form of $\Hm^2$-matrices \cite{Hackbusch2002}, which employ a multilevel structure onto the low-rank factorizations of the subblocks. This hierarchical storage structure can improve the complexity to $\bigO(N)$. Yet, to enable construction in linear time, most $\Hm^2$-matrix frameworks fall back to analytic tools. To retain the generality of the algebraic approach, they utilize polynomial approximations \cite{Borm2005c} which often overestimate the rank of the blocks. To alleviate this problem, schemes have been developed to recompress $\Hm^2$-matrices \cite{Hackbusch2002,Borm2004,Borm2005a,Borm2009a}. This can even be performed during construction, such that the original uncompressed $\Hm^2$-matrix is never fully stored in memory. The technique has also been extended to directional $\Hm$-matrices ($\mathcal{DH}$-matrices) \cite{Borm2017a,Borm2020,Borm2023}.

In the spirit of these recompression techniques, we investigate the compression of $\Hm$-matrices into the uniform $\Hm$-matrix format, which lies halfway between the $\Hm$- and $\Hm^2$-matrix formats. Uniform $\Hm$-matrices retain the $\bigO(N\log N)$ complexity of regular $\Hm$-matrices, but with an improved proportionality constant. This means that a constant compression factor is achieved with a relatively small implementation effort, owing to the simplicity of uniform $\Hm$-matrices. This is in contrast to the more involved $\Hm^2$-matrix format which can achieve better results but has to rely on a combination of analytic construction and algebraic recompression to do so. While not asymptotically optimal, uniform $\Hm$-matrices retain the appeal of regular $\Hm$-matrices in that they are much easier to use. As we shall show, the format also enables a simple parallellization of the matrix-vector product. Interestingly, our numerical results indicate that the compression factor of memory usage leads to a similar improvement of the timings of the matrix-vector product.

Since its original introduction together with the regular $\Hm$-matrix \cite{Hackbusch1999}, the uniform $\Hm$-matrix format has been explored very little. In \cite{Bebendorf2009}, a recompression technique based on Chebyshev polynomials is proposed that leads to uniform $\Hm$-matrices, but reduction in memory usage is only achieved when doing considerable reassembly during the matrix-vector product. Apart from this reference, uniform $\Hm$-matrices are introduced as an intermediate step towards $\Hm^2$-matrices \cite{Hackbusch2002,Borm2016}, but do not seem to have been an object of study in their own right. With the above-mentioned advantages in mind, in this paper we aim to illustrate that any application using $\Hm$-matrices may, with little effort, benefit from using uniform $\Hm$-matrices instead.

During the writing of this paper, it has come to our attention that similar (unpublished) work on uniform $\Hm$-matrices has been independently done by R. Kriemann. An implementation is available in the codebase \href{http://libhlr.org}{libHLR}\footnote{Specific results on uniform $\Hm$-matrices can be found at \href{http://libhlr.org/programs/uniform}{libhlr.org/programs/uniform}.} and preliminary results indicate, as we describe in more detail in this paper, that the uniform $\Hm$-matrix format results in memory reduction. Recent work by Kriemann on memory reduction includes mixed-precision compression of structured matrix formats in \cite{KriemannPreprint2023}.

The structure of the paper is as follows. \Cref{sec:2_hmatrices} introduces regular and uniform $\Hm$-matrices in the general setting. In \cref{sec:3_bem}, the Boundary Element Method and the $\Hm$-matrices that arise from this application are discussed. Then, \cref{sec:4_alg} proposes a technique to compress regular $\Hm$-matrices into uniform $\Hm$-matrices. Error analysis on this compression is provided in \cref{sec:5_analysis}. Lastly, \cref{sec:6_exp} presents numerical results on the compression scheme applied to matrices that originate from a boundary integral formulation of the Helmholtz equation and \cref{sec:7_conclusion} gives a brief conclusion.

\section{Hierarchical matrices}\label{sec:2_hmatrices}

This section gives a brief introduction to regular (\cref{ssec:2.1_reg}) and uniform (\cref{ssec:2.2_unf}) $\Hm$-matrices. For a more in-depth exposition on the subject, we refer to the books by Bebendorf \cite{Bebendorf2008book} and Hackbusch \cite{Hackbusch2015book}.

\subsection{Regular $\Hm$-matrices}\label{ssec:2.1_reg}
The $\Hm$-matrix format is based on partioning a matrix $A\in\C^\ItJ$ into submatrices such that most can be approximately represented by matrices of low rank. As $\Hm$-matrix methods aim to approximate the matrix in quasilinear complexity, i.e. quasilinear in the dimensions $|I|$ and $|J|$ of the matrix, subdividing the set $\ItJ$ directly is infeasible. Typically, the partition is induced by hierarchical subdivisions of the row and column indices $I$ and $J$ separately.

A hierarchy of subsets of an index set $I$ is defined using a \emph{cluster tree} $\tree_I$. 
\begin{definition}[cluster tree]\label{def:cluster_tree}
    A cluster tree $\tree_I$ for an index set $I$ is a tree with $I$ as its root, that employs a subdivision strategy $s:\mathcal{P}(I) \to \mathcal{P}(I)\times\dots\times\mathcal{P}(I)$\footnote{$\mathcal{P}(I)$ denotes the power set of $I$.} at each non-leaf node such that
    \begin{itemize}
        \item the children of a node form a partition of their parent,
        $$\ct=\bigcup_{\ct'\in s(\ct)}\ct' \quad\text{for all}~\ct\in\tree_I~\text{with}~s(\ct)\ne\emptyset;~\text{and}$$
        \item children of the same parent are disjoint,
        $$\ct_1\ne\ct_2 \Rightarrow \ct_1\cap\ct_2=\emptyset \quad\text{for all}~\ct\in\tree_I,~\ct_1,\ct_2\in s(\ct).$$
    \end{itemize}
    Nodes of a cluster tree are called \emph{clusters}. Its leaves are denoted by $\leaves(\tree_I)$.
\end{definition}
To each cluster $\ct\in\tree_I$, we can ascribe its level as the distance (in number of applications of $s$) to the root $I$ by $\mathrm{level}(\ct)$. Each level
$$\tree_I^{(\ell)}:=\{\ct\in\tree_I : \mathrm{level}(\ct)=\ell\}$$
of $\tree_I$ is a collection of disjoint subsets of $I$. The total number of levels in a cluster tree is called its depth and is denoted by $L(\tree_I):=\max_{\ct\in\tree_I} \mathrm{level}(\ct)+1$.

A pair of clusters $(\ct,\cs)$ defines a subset $\ts$ of $\ItJ$ and thus also a submatrix of $A\in\C^\ItJ$. From two cluster trees $\tree_I$ and $\tree_J$, a \emph{block cluster tree} can be constructed.
\begin{definition}[block cluster tree]\label{def:block_cluster_tree}
    Given cluster trees $\tree_I$ and $\tree_J$ for index sets $I$ and $J$, a block cluster tree $\tree_\ItJ$ for $\ItJ$ is a cluster tree of $\ItJ$ where
    \begin{itemize}
        \item each node in $\tree_\ItJ$ originates from a pair of cluster nodes in $\tree_I$ and $\tree_J$,
        $$\forall b\in\tree_\ItJ,~\exists \ct\in\tree_I, \cs\in\tree_J ~\text{s.t.}~b=\ts;~\text{and}$$
        \item for each non-leaf node $b=\ts\in\tree_\ItJ$, either $s(b)=s(\ct)\times s(\cs)$, $s(b)=\{\ct\}\times s(\cs)$ or $s(b)=s(\ct)\times\{\cs\}$ holds.
    \end{itemize}
    Nodes of a block cluster tree are called \emph{blocks}. Given $b=\ts\in\tree_\ItJ$, $\ct\in\tree_I$ and $\cs\in\tree_J$ are referred to as its row and column cluster respectively.
\end{definition}

A block cluster tree $\tree_\ItJ$ induces a partition of $\ItJ$ through its leaves
$$\mathcal{L}(\tree_\ItJ):=\{b\in\tree_\ItJ : s(b)=\emptyset\}.$$
This partition corresponds to a subdivision of matrix $A\in\C^\ItJ$ into submatrices which we denote by $\restr{A}{\ts}\in\C^\ts$ for all $\ts\in\mathcal{L}(\tree_\ItJ)$.

Constructing a $\Hm$-matrix approximation of $A$ is achieved by finding an approximate low-rank factorization for each submatrix $\restr{A}{\ts}$. However, in general, not all these submatrices will have approximate low rank. Therefore, the leaves are divided into an \emph{admissible} part $\mathcal{L}^+(\tree_\ItJ)\subset\leaves(\tree_\ItJ)$ and \emph{inadmissible} (or dense) part $\mathcal{L}^-(\tree_\ItJ):=\leaves(\tree_\ItJ)\setminus\leaves^+(\tree_\ItJ)$.

The partitions $P_\ItJ:=\leaves(\tree_\ItJ)$, $P^+_\ItJ:=\leaves^+(\tree_\ItJ)$ and $P^-_\ItJ:=\leaves^-(\tree_\ItJ)$ allow us to state the definition of a \emph{hierarchical} matrix.
\begin{definition}[hierarchical matrix]\label{def:hmat}
A hierarchical matrix ($\Hm$-matrix) is a matrix $A\in\C^\ItJ$ together with a partition $P_\ItJ$ of $\ItJ$ with admissible subset $P^+_\ItJ\subset P_\ItJ$, such that for all blocks $\ts\in P^+_\ItJ$,
\begin{equation*}
    \exists X\in\C^{\ct\times k},Y\in\C^{\cs\times k}: \quad \restr{A}{\ts}=XY^*
\end{equation*}
where $k:=k(\ts)$ is variable across submatrices.
\end{definition}

Given a matrix $A\in\C^\ItJ$, we denote its $\Hm$-matrix approximation by $A^{\Hm}$.
In an implementation, to represent a $\Hm$-matrix one needs to store the low-rank submatrices as well as the dense ones,
$$\{(X_b,Y_b)\}_{b\in P^+_\ItJ}~\text{and}~\{\restr{A}{b}\}_{b\in P^-_\ItJ},$$
where we have added subscripts to $X$ and $Y$ to distinguish between factorizations. The amount of storage this requires is characterized by the defining partitions and the rank distribution ${(k_b)}_{b\in P^+_\ItJ}$ of the factorizations. 

\subsection{Uniform $\Hm$-matrices}\label{ssec:2.2_unf}
In regular $\Hm$-matrices, as defined above, the low-rank factorizations of the admissible blocks are independent of each other. This setup is flexible but does not allow sharing of information across blocks. Storage requirements can be reduced by introducing a basis associated with each cluster, and to store a low-rank submatrix in terms of the bases of its row and column clusters.

The only clusters to consider are those in $\tree_I$ or $\tree_J$ that actually occur in admissible leaves of $\tree_\ItJ$. They are respectively
\begin{equation}\label{eq:clusters_in_partition}
\begin{aligned}
    \Lrc:=\Lrc(\tree_\ItJ)=\{\ct\in\tree_I : \exists\cs\in\tree_J~~\text{s.t.\ }~\ts\in\leaves^+(\tree_\ItJ) \}, \\
    \Lcc:=\Lcc(\tree_\ItJ)=\{\cs\in\tree_J : \exists\ct\in\tree_I~~\text{s.t.\ }~\ts\in\leaves^+(\tree_\ItJ) \}. 
\end{aligned}
\end{equation}
The \emph{cluster bases} are two families of matrices for these sets of clusters:
$$ \mathcal{U}:=\{U_\ct\in\C^{\ct\times \lt}\}_{\ct\in\Lrc}~\text{and}~\mathcal{V}:=\{V_\cs\in\C^{\cs\times \ls}\}_{\cs\in\Lcc},$$
in which $(\lt)_{\ct\in\Lrc}$ and $(\ls)_{\cs\in\Lcc}$ are their corresponding rank distributions. First established in \cite{Hackbusch1999}, the concept of row and column cluster bases introduces a new type of hierarchical matrix.

\begin{definition}[uniform hierarchical matrix]\label{def:uhmat}
A uniform hierarchical matrix ($\UHm$-matrix) is a matrix $A\in\C^\ItJ$ together with a partition $P_\ItJ$ of $\ItJ$ with admissible subset $P^+_\ItJ\subset P_\ItJ$, and with row and column cluster bases $\mathcal{U}$ and $\mathcal{V}$, such that for all blocks $b=\ts\in P^+_\ItJ$:
\begin{equation*}
    \exists S_b\in\C^{\lt\times \ls} : \quad \restr{A}{b}=U_\ct S_b V_\cs^*.
\end{equation*}
\end{definition}

Given a matrix $A\in\C^\ItJ$, its $\UHm$-matrix approximation is denoted by $A^{\UHm}$. Similarly to the $\Hm$-matrix format, a $\UHm$-matrix can be represented by storing a collection of matrices. In this case, these are
$$\mathcal{U}=\{U_\ct\}_{\ct\in\Lrc},\mathcal{V}=\{V_\cs\}_{\cs\in\Lcc},\{S_b\}_{b\in P^+_\ItJ}~\text{and}~\{\restr{A}{b}\}_{b\in P^-_\ItJ}.$$
The storage of the inadmissible blocks is unchanged compared to $\Hm$-matrices.

We introduce a measure to assess the possible reduction in storage when going from an $\Hm$-matrix to a $\UHm$-matrix, namely the maximum number of admissible blocks associated with a given cluster.
\begin{definition}[sparsity constant]\label{def:sparse_const}
Let $\tree_I$ and $\tree_J$ be cluster trees for the index sets $I$ and $J$ and let $\tree_{\ItJ}$ be a block cluster tree for $\ItJ$. Using $\Lrc:=\Lrc(\tree_\ItJ)$ and $\Lcc:=\Lcc(\tree_\ItJ)$ as defined in \cref{eq:clusters_in_partition}, we introduce the sets
\begin{equation*}
\begin{aligned}
        \row{\ct} := \{ \cs\in\tree_J ~:~ \ts\in\leaves^+(\tree_\ItJ) \}, &&& \ct\in\Lrc, \\     
        \col{\cs} := \{ \ct\in\tree_I ~:~ \ts\in\leaves^+(\tree_\ItJ) \}, &&& \cs\in\Lcc.
\end{aligned}
\end{equation*}
The \emph{sparsity constant} $\csp$ of a block cluster tree $\tree_{\ItJ}$ is 
$$ \csp(\tree_{\ItJ}):=\max\left\{\max_{\ct\in\Lrc}|\row{\ct}| ,\max_{\cs\in\Lcc}|\col{\cs}|\right\},$$
i.e.\ the maximum number of admissible leaf blocks $\ts\in \leaves^+(\tree_{\ItJ})$ associated with any row cluster $\ct\in\Lrc\subset\tree_I$ or column cluster $\cs\in\Lcc\subset\tree_J$.
\end{definition}

The sparsity constant allows to bound the storage requirements of both $\Hm$- and $\UHm$-matrices.
\begin{lemma}[Storage cost of hierarchical matrices]\label{lem:storage_cost}
Assume a matrix $A$ expressible in both $\Hm$-matrix and $\UHm$-matrix format with admissible partition $P^+_\ItJ$, and with maximum block rank $k_{\max}$ and maximum cluster rank $\ell_{\max}$ respectively.
The storage cost, or equivalently the total number of matrix elements to be stored in a representation, for the admissible blocks in the two formats, denoted by $N_\mathrm{st}^\Hm(A)$ and $N_\mathrm{st}^\UHm(A)$ respectively, are bounded by
\begin{subequations}
\begin{align}
    N_\mathrm{st}^\Hm(A) &\le \csp \kmax \left( L(\tree_I)|I| + L(\tree_J)|J| \right) \label{eq:storage_cost_H} \\
    N_\mathrm{st}^\UHm(A) &\le \lmax \left( L(\tree_I)|I| + L(\tree_J)|J| \right) + \lmax^2 2\csp \min\{|I|,|J|\}/n_\mathrm{min} \label{eq:storage_cost_U}
\end{align}
\end{subequations}
where $n_{\min}$ is the minimal size of a cluster in both $\tree_I$ and $\tree_J$.
\end{lemma}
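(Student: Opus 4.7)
The plan for the $\mathcal{H}$-matrix bound is to sum the storage of each admissible block directly. Each $b=\ts\in P^+_\ItJ$ is stored as a factorization $(X_b,Y_b)$ with $X_b\in\C^{\ct\times k_b}$ and $Y_b\in\C^{\cs\times k_b}$, contributing $k_b(|\ct|+|\cs|)$ entries. Bounding $k_b\le\kmax$ and splitting the sum into a row-part and a column-part gives
\begin{equation*}
N_\mathrm{st}^\Hm(A)\le\kmax\Bigl(\sum_{\ct\in\Lrc}|\ct|\,\csp^r(\tree_\ItJ,\ct)+\sum_{\cs\in\Lcc}|\cs|\,\csp^c(\tree_\ItJ,\cs)\Bigr),
\end{equation*}
after re-indexing via $\row{\ct}$ and $\col{\cs}$. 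Then I would apply the sparsity constant to pull out $\csp$, and use the crucial observation that the clusters inside a single level of $\tree_I$ are pairwise disjoint subsets of $I$ (by Definition \ref{def:cluster_tree}), so $\sum_{\ct\in\tree_I^{(\ell)}}|\ct|\le|I|$. Summing over the $L(\tree_I)$ levels yields $\sum_{\ct\in\Lrc}|\ct|\le L(\tree_I)|I|$, and analogously for the column side, producing \eqref{eq:storage_cost_H}.

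For the $\mathcal{UH}$-matrix bound, the storage splits into three collections: the row cluster bases $\{U_\ct\}$, the column cluster bases $\{V_\cs\}$, and the coupling matrices $\{S_b\}$. The first two contribute $\sum_{\ct\in\Lrc}|\ct|\lt+\sum_{\cs\in\Lcc}|\cs|\ls$, which, after the $\ell_{\max}$ bound and the same level-by-level argument as above, gives $\lmax(L(\tree_I)|I|+L(\tree_J)|J|)$. Crucially, no factor of $\csp$ appears here, because each cluster basis is stored only once, regardless of how many admissible blocks reference it — this is the source of the compression gain.

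The remaining task is to bound the coupling-matrix contribution $\sum_{b\in P^+_\ItJ}\lt\ls\le\lmax^2|P^+_\ItJ|$, so I need to bound $|P^+_\ItJ|$. Summing $\csp^r(\tree_\ItJ,\ct)$ over $\ct\in\Lrc$ counts every admissible block once, giving $|P^+_\ItJ|=\sum_{\ct\in\Lrc}\csp^r(\tree_\ItJ,\ct)\le\csp|\Lrc|$, and symmetrically $|P^+_\ItJ|\le\csp|\Lcc|$. To convert $|\Lrc|$ into something depending on $|I|$ and $n_{\min}$, I would use that a cluster tree with minimal leaf size $n_{\min}$ has at most $|I|/n_{\min}$ leaves and, under the standard assumption that every inner node has at least two children, at most $2|I|/n_{\min}$ nodes in total; hence $|\Lrc|\le 2|I|/n_{\min}$ and $|\Lcc|\le 2|J|/n_{\min}$. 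Taking the more favourable of the two bounds gives $|P^+_\ItJ|\le 2\csp\min\{|I|,|J|\}/n_{\min}$, which combines with the cluster-basis term to yield \eqref{eq:storage_cost_U}.

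The main obstacle is the last step: the inequality $|\Lrc|\le 2|I|/n_{\min}$ is not purely combinatorial from Definitions \ref{def:cluster_tree}–\ref{def:block_cluster_tree} — it requires the (standard, but so far tacit) hypotheses that cluster trees have no unary internal nodes and that every leaf cluster has at least $n_{\min}$ indices. I would make these assumptions explicit at the start of the proof, and then the rest reduces to careful bookkeeping.
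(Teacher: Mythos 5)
Your proof follows the paper's own argument step for step: the $\Hm$-bound is obtained by re-indexing the block sum through $\row{\ct}$ and $\col{\cs}$, pulling out $\csp$, and using the per-level disjointness of clusters to reach $L(\tree_I)|I| + L(\tree_J)|J|$; the $\UHm$-bound likewise splits storage into cluster bases (with no $\csp$ factor) plus coupling matrices whose count is controlled by $2\csp\min\{|I|,|J|\}/n_{\min}$. The only divergence is that you supply a short self-contained derivation of that block-count bound via $|P^+_\ItJ|\le\csp\min\{|\Lrc|,|\Lcc|\}$ and $|\tree_I|<2|I|/n_{\min}$ where the paper cites Bebendorf, and you correctly flag the tacit hypotheses this derivation requires (internal cluster nodes with at least two children, leaf clusters of size at least $n_{\min}$) -- hypotheses that are likewise implicit in the paper's citation.
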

\begin{proof}
Expression \cref{eq:storage_cost_H} corresponds to the storage of matrices $\{(X_b,Y_b)\}_{b\in P^+_\ItJ}$ of which the ranks are bounded by $k_\mathrm{max}$,
\begin{equation}\label{eq:storage_proof_H}
	N_\mathrm{st}^\Hm(A) \le \kmax\sum_{b\in P^+_\ItJ}\left(|\ct|+|\cs|\right) \le \csp \kmax \left( \sum_{\ct\in\Lrc}|\ct| + \sum_{\cs\in\Lcc}|\cs| \right).
\end{equation}
We use that each row and column cluster occurs at most $\csp$ times in the admissible partition. The two sums in the right hand side are bounded by $L(\tree_I)|I|$ and $L(\tree_J)|J|$ respectively, by first noticing that $\Lrc\subseteq \tree_I$ (and $\Lcc\subseteq \tree_J$) and then recalling that each level of a cluster tree is a set of disjoint subsets of its root. Summing over the levels proves the bound. 

For \cref{eq:storage_cost_U}, the storage contributions originate from cluster bases $\mathcal{U}$, $\mathcal{V}$ and coefficient matrices $\{S_b\}_{b\in P^+_\ItJ}$. The storage of $\mathcal{U}$ and $\mathcal{V}$ results in the same sums as in \cref{eq:storage_proof_H} but with $\lmax$ as constant in front. The coefficient matrices have an associated storage cost bounded by $\sum_{b\in P^+_\ItJ}\lmax^2$. In \cite[\S1.5]{Bebendorf2008book} it is shown that the number of blocks in a partition $P^+_\ItJ=\leaves^+(\tree_{\ItJ})$ is bounded by $2\csp \min\{|I|,|J|\}/n_\mathrm{min}$.
\end{proof}
\begin{remark}
	\Cref{lem:storage_cost} omits the storage cost of the dense blocks as they contribute an equal amount to both formats. Namely, it can be shown that this cost is bounded by $8\csp^{\mathrm{dns}} n_{\mathrm{min}}\min\{|I|,|J|\}$ if $|\ct|\le 2n_{\mathrm{min}}$ is assumed for all $\ct\in\leaves(\tree_I)\cup\leaves(\tree_J)$. Sparsity constant $\csp^{\mathrm{dns}}$ indicates the maximum number of dense blocks a cluster occurs in. The proofs by Bebendorf \cite[Theorem 2.6]{Bebendorf2008book} and Hackbush \cite[Lemma 6.13]{Hackbusch2015book} -- on which the above proof is also based -- incorporate the dense contribution into the log-linear part to simplify the bound on the total memory usage of the $\Hm$-matrix.
\end{remark}

While \cref{lem:storage_cost} only provides bounds, it indicates that possible storage reduction in using the $\UHm$-matrix format can be attributed to the sparsity constant $\csp$. Suppose an $\Hm$-matrix has constant ranks and constant sparsity constants. Disregarding the smaller contribution of the coefficient matrices, a factor of up to $\csp$ in compression can be achieved if all submatrices with the same row or column cluster share a basis. For general $\Hm$-matrices the latter might not be the case, as the admissible blocks may be arbitrary. For matrices arising from the Boundary Element Method, however, analysis hints at these submatrices possibly sharing a basis.

\subsection{Adaptive Cross Approximation}\label{ssec:2.3_ACA}
It is not specified in \cref{ssec:2.1_reg} how the low-rank factorizations of the regular $\Hm$-matrix are obtained. A popular choice to produce the block-wise factorizations $\restr{A}{b}\approx X_bY_b$ is Adaptive Cross Approximation (ACA). It constructs a low-rank approximation to $\restr{A}{b}$ by sampling rows and columns associated with pivots. It is an entirely numerical procedure which makes no (analytical) assumptions about where the matrix is coming from. Depending on the pivot selection strategy, different variants of ACA have been described. These include but are not limited to ACA with \emph{partial pivoting} \cite{Bebendorf2003} and ACA+ \cite{Grasedyck2005,Borm2005b}. 

For a rank $k$ approximation to a block $b=\ts$, ACA requires at most
$$C_{\mathrm{aca}}^{(1)}k \big(|\ct|+|\cs| \big) \left( C_\mathrm{elem} + C_{\mathrm{aca}}^{(2)} k \right)$$
operations, with $C_\mathrm{elem}$ the operation count of sampling one element of the matrix \cite[\S3.2]{Dirckx2024}. In the Galerkin BEM setting (cf.\ infra), $C_\mathrm{elem}=\bigO(q^4)$ when using quadrature of order $q$. The constants $C_{\mathrm{aca}}^{(1)}$ and $C_{\mathrm{aca}}^{(2)}$ depend on the specific version of ACA used.

The ACA algorithms typically do not guarantee that the obtained factorization is rank-optimal in the sense that its approximate rank w.r.t.\ the specified tolerance might be lower than its dimension $k$ seems to imply. Using the QR-decompositions of both $X_b$ and $Y_b$, the truncated SVD of the factorization can be cheaply computed in 
$$C_{\mathrm{arc}}^{(1)} k^2 \big(|\ct|+|\cs|\big) + C_{\mathrm{arc}}^{(2)} k^3$$
operations\footnote{The constants $C_{\mathrm{arc}}^{(1)}$ and $C_{\mathrm{arc}}^{(2)}$ depend on the algorithms used for the QRs and the SVD.} \cite{Grasedyck2003}. In this case, the representation of admissible blocks may take the form $\restr{A}{b}\approx U_b \Sigma_b V_b^*$ with $U_b$ and $V_b$ orthogonal and $\Sigma_b$ diagonal.

\section{Boundary Element Method}\label{sec:3_bem}

\subsection{Boundary integral equations and BEM}
The main application area for $\Hm$-matrices and related techniques is the efficient numerical solution of elliptic boundary value problems. In this setting, the differential equations of the boundary value problem are reformulated as boundary integral equations (BIEs) \cite[\S7]{Mclean2000} with integral operators $\mathcal{A}:\fspace{V}\to \fspace{W}$ of the form
$$ (\mathcal{A}u)(\x) = \int_{\Gamma}{g(\x,\y)u(\y)\d s_\y}$$
where $\Gamma:=\partial\Omega$ is the boundary of a (bounded Lipschitz) domain $\Omega\subset\mathbb{R}^d$ and $g(\cdot,\cdot)$ the \emph{kernel function} of the operator, typically a Green's function or a derivative thereof.

To solve the integral equation, it is discretized. The Galerkin formulation of the Boundary Element Method (BEM), based on a variational formulation, restricts the infinite dimensional function spaces $\fspace{V}$ and $\fspace{W}^*$ to finite dimensional subspaces $\fspace{V}_N\subset \fspace{V}$ with a basis $\{\phi_j\}_{j\in J}$ ($|J|=N$) and $\fspace{W}^*_M\subset \fspace{W}^*$ with a basis $\{\psi_i\}_{i\in I}$ ($|I|=M$) respectively. Characteristic in BEM is that each basis function has local support on $\Gamma$ and overlaps only with few other basis functions.

The Galerkin BEM discretization results in a system matrix $A$ with elements
\begin{equation*}
    a_{ij} = \langle A\phi_j,\psi_i\rangle = \int_\Gamma{\int_\Gamma{g(\x,\y) \psi_i(\x) \phi_j(\y) \d s_\x} \d s_\y},\quad i\in I, j\in J.
\end{equation*}
Two other ways of discretizing the integral equation are collocation methods and Nystr\o m methods. All three discretizations yield a dense matrix amenable to $\Hm$-matrix compression.

\subsection{$\Hm$-matrices for BEM}
Matrix $A$ is typically fully populated due to the non-local kernel function $g$. However, kernel functions originating from elliptic boundary value problems are \emph{asymptotically smooth} \cite[\S3.2]{Bebendorf2008book}, making them separable (or degenerate) when $\x$ and $\y$ are well-separated.  In that case $g$ can be approximated as
\begin{equation}\label{eq:sep_exp}
    g(\x,\y) \approx \sum_{\nu=1}^{r}{g_{\nu}^{(1)}(\x)g_{\nu}^{(2)}(\y)}.
\end{equation}

In the boundary element method, indices $i\in I$ and $j\in J$ are associated with parts of the computational domain $\Gamma$. For Galerkin, these are the supports of $\psi_i$ and $\phi_j$, denoted by $\supp_{\mathrm{row}}(i)=\supp{(\psi_i)}$ and $\supp_{\mathrm{col}}(j)=\supp{(\phi_j)}$. The supports of row and column clusters $\ct$ and $\cs$ are naturally
$$\supp{(\ct)}=\bigcup_{i\in\ct}\supp_{\mathrm{row}}(i) \quad\text{and}\quad \supp{(\cs)}=\bigcup_{j\in\cs}\supp_{\mathrm{col}}(j)$$
and the support of a cluster is regularly identified with itself.

If the above expansion \cref{eq:sep_exp} holds for all $\x\in\supp{(\ct)}$ and $\y\in\supp{(\cs)}$ of block $\ts$ up to some tolerance, the corresponding submatrix of $A$ will be of approximate rank (at most) $r$. Such blocks are identified using an \emph{admissibility criterion}. For kernel functions that are asymptotically smooth w.r.t.\ either $\x$ or $\y$, the condition
\begin{equation}\label{eq:strong_adm}
    \eta \, \text{dist}(\ct,\cs) > \max\{\text{diam}(\ct),\text{diam}(\cs)\}
\end{equation}
guarantees approximate separability \cite[\S3.3]{Bebendorf2008book}. Parameter $\eta$ specifies what is meant by ``sufficiently separated". An often-used relaxation to condition \cref{eq:strong_adm} if the kernel is asymptotically smooth w.r.t.\ both variables, is
\begin{equation}\label{eq:weak_adm}
    \eta \, \text{dist}(\ct,\cs) > \min\{\text{diam}(\ct),\text{diam}(\cs)\}.
\end{equation}

The admissibility criterion is used in BEM to identify leaves of the block cluster tree. If a block satisfies the admissibility criterion, it is labeled a leaf and becomes part of the admissible partition $P^+_\ItJ$. If not, the block is further subdivided unless it is too small. Then, it becomes part of the inadmissible leaves ($P^-_\ItJ$).

A log-linear representation of BEM matrices using the $\Hm$-matrix format -- log-linear in the total number of degrees of freedom, i.e., in the sizes $|I|$  and $|J|$ of the index sets -- relies on an appropriate partition $P_\ItJ$ and thus on well-chosen cluster trees $\tree_I$ and $\tree_J$. It is obvious from the admissibility criterion that the smaller the diameters of the clusters, the more likely it is for blocks to be admissible. Therefore the cluster trees are generally based on the geometry of the clusters such that subdivision minimizes the diameters of the children.

If an appropriate clustering strategy is employed, e.g.\ one based on principal component analysis (PCA) \cite{Pearson1901}, and the basis functions $\{\psi_i\}_{i\in I}$ and $\{\phi_j\}_{i\in J}$ have \emph{bounded overlap}, meaning that any point in their support is shared by at most a constant number of other basis functions, it holds that
\begin{itemize}
    \item the depths of the trees scale logarithmically with the index set sizes, i.e.\ $L(\tree_I) \sim \log|I|$, $L(\tree_J) \sim \log|J|$ and $L(\tree_\ItJ) \sim \min\{\log|I|,\log|J|\}$; and
    \item the sparsity constant $\csp$ is bounded independently of the sizes $|I|$ and $|J|$.
\end{itemize}
Proofs can be found in \S1.4 and \S1.5 of \cite{Bebendorf2008book} respectively\footnote{These proofs assume a level-conserving block cluster tree, implying restriction of the subdivision to $s(b)=s(\ct)\times s(\cs)$. However, the proofs can easily be extended to address block cluster trees with any of the three subdivision strategies in \cref{def:block_cluster_tree}.}, including precise conditions on the cluster trees. 

Together with \cref{lem:storage_cost}, the regular and uniform $\Hm$-matrix format here lead to storage costs of log-linear complexity. Asymptotic smoothness of the kernel and usage of admissibility criterion \cref{eq:strong_adm} or \cref{eq:weak_adm} guarantee that the submatrix ranks are bounded independently of the matrix size. However, we note that when the $\Hm$-matrix error is decreased at the same rate as the discretization error, the ranks increase logarithmically with $|I|$ and $|J|$.

\section{Uniform $\Hm$-matrix compression}\label{sec:4_alg}

We consider the compression of a regular $\Hm$-matrix into the $\UHm$-matrix format. The scheme presented here bears a close resemblance to the recompression of $\Hm^2$-matrices in \cite{Borm2007a,Borm2007b,Borm2009a} and $\DHm^2$-matrices in \cite{Borm2017a,Borm2020,Borm2023}. In the former, this also includes direct compression of general matrices and $\Hm$-matrices into the $\Hm^2$-matrix format. The focus of our method lies in the use of a uniform $\Hm$-matrix as an end goal, rather than an intermediate step towards an $\Hm^2$-matrix. In addition, we illustrate how compression and construction can be combined to build a $\UHm$-matrix approximation directly from a general matrix in log-linear time, along with error estimates and an analysis of computational complexity.

\subsection{Computation of optimal cluster bases}\label{ssec:4.1_probstat}
We first define the problem. We want to compress a matrix $A$ in $\Hm$-matrix format by transforming it to the $\UHm$-matrix format. \Cref{lem:storage_cost} showed that the gains are found in the factor $\csp$. As \cref{def:uhmat} states, two necessary ingredients are the row cluster basis $\mathcal{U}$ and column cluster basis $\mathcal{V}$. For reasons that will become clear later on, we restrict ourselves to \emph{orthogonal cluster bases}. We say that $\mathcal{U}$ is an orthogonal cluster basis if each basis matrix is orthogonal,
$$ U_\ct^*U_\ct = I,\quad \forall U_\ct \in \mathcal{U}.$$
If both $\mathcal{U}$ and $\mathcal{V}$ are orthogonal, each submatrix $\restr{A}{b}$ for $b=\ts$ can be projected orthogonally,
\begin{equation*}
\begin{aligned}
    U_\ct U_\ct^* \restr{A}{b} V_\cs V_\cs^* = U_\ct S_b V_\cs^*, && S_b:=U_\ct^*\restr{A}{b} V_\cs,
\end{aligned}
\end{equation*}
leading to the optimal coefficient matrix $S_b$ (w.r.t.\ the Frobenius norm) for the proposed $\mathcal{U}$ and $\mathcal{V}$.

To choose ``optimal'' cluster bases, we must first identify what it means for a cluster basis to be optimal. Considering a cluster $\ct\in\Lrc$ and its basis matrix $U_\ct\in\mathcal{U}$, the corresponding \emph{agglomeration matrix} $A_\ct$ of $A$ is the horizontal concatenation of all blocks involving $\tau$,
\begin{align*}
    A_\ct := \restr{A}{\ct\times\mathcal{F}(\ct)}\in\C^{\ct\times\mathcal{F}(\ct)}, \quad \mathcal{F}(\ct):=\dot{\bigcup}~{\row{\ct}} \\ \quad\text{s.t.}\quad \restr{A_{\tau}}{b} = \restr{A}{b}\quad\text{for all}~\cs\in\row{\ct}~\text{with}~b=\ts.
\end{align*}
We denote the corresponding submatrix of the uniform approximation $A^\UHm$ by $A^\UHm_\ct$. The error w.r.t.\ the spectral norm of this subblock can be decomposed as follows,
\begin{equation*}
\begin{aligned}
    \norm{A_\ct-A^\UHm_\ct}_2^2 &= \Big\|\sum_{\cs\in\row{\ct}} \left(\restr{A}{b} - U_\ct U_\ct^* \restr{A}{b} V_\cs V_\cs^*\right)\Big\|_2^2 \quad\quad\quad\quad\quad  (b=\ts) \\
    &= \Big\|\sum_{\cs\in\row{\ct}} \Big(\restr{A}{b} - U_\ct U_\ct^*\restr{A}{b} + U_\ct U_\ct^*\left(\restr{A}{b}- \restr{A}{b}V_\cs V_\cs^*\right)\Big)\Big\|_2^2 \\
    &= \Big\|A_\ct - U_\ct U_\ct^* A_\ct + \sum_{\cs\in\row{\ct}} U_\ct U_\ct^*\left(\restr{A}{b}- \restr{A}{b}V_\cs V_\cs^*\right)\Big\|_2^2 \\ 
    &\le \Big\|A_\ct - U_\ct U_\ct^* A_\ct\Big\|_2^2 + \sum_{\cs\in\row{\ct}}\Big\|U_\ct U_\ct^*\left(\restr{A}{b}- \restr{A}{b}V_\cs V_\cs^*\right)\Big\|_2^2 \\
    &\le \Big\|A_\ct - U_\ct U_\ct^* A_\ct\Big\|_2^2 + \sum_{\cs\in\row{\ct}}\Big\|\restr{A}{b}- \restr{A}{b}V_\cs V_\cs^*\Big\|_2^2.
\end{aligned}
\end{equation*}
Here, the error originating in the projection onto $U_\ct$ is decoupled from the error due to projections onto $V_\cs\in\mathcal{V}$. The same derivation holds in the Frobenius norm, in which case the first upper bound becomes an equality. The observation that orthogonal cluster bases result in decoupled errors is also made in the aforementioned $\Hm^2$- and $\mathcal{D}\Hm^2$-matrix compression schemes. Given specified tolerances $\epsilon_\ct$ for each $\ct\in\Lrc$, we define the optimal orthogonal cluster basis $\mathcal{U}$ by requiring
\begin{equation}\label{eq:optimal_basis}
U_\ct 
= \argmin_{Q\in \C^{\ct \times \ell_\ct}}~{\lt} \quad\text{s.t.}~\norm{A_\ct-Q Q^*A_\ct}\le\epsilon_\ct ~\text{and}~ Q^*Q=I, \quad \forall U_\ct \in \mathcal{U}.
\end{equation}
The analysis in \cref{sec:5_analysis} justifies this criterion (cf.\ infra).

The requirements on the cluster basis in the $\Hm^2$- and $\mathcal{D}\Hm^2$-matrix compression schemes is notably different from \cref{eq:optimal_basis}. In the former, a specified error $\epsilon_b$ is required on $\|\restr{A}{b}\|$ for each $b\in P^+_{\ItJ}$. This is necessary because the nestedness of the cluster basis results in more complicated error propagation. It is also the lack of nestedness in a uniform $\Hm$-matrix that makes the uniform compression considerably simpler.

For both the spectral and Frobenius norms, the optimal cluster basis is obtained from the left singular vectors of the truncated SVD of $A_\ct$. This is outlined in \cref{alg:clusterBasisConstr}. The column cluster basis $\mathcal{V}$ can be computed similarly from $A^*$.

\begin{algorithm2e}[t]
\SetKwInOut{Input}{input}
\SetKwInOut{Output}{output}
\SetKw{Init}{init}{}{}
\SetKwComment{Comment}{// }{}
\SetAlgoLined
\Input{Matrix $A\in\C^\ItJ$, admissible partition $P^+_\ItJ=\leaves^+(\tree_\ItJ)$ with row clusters $\Lrc$ and tolerances $\{\epsilon_\ct\}_{\ct\in\Lrc}$.}
\Output{Orthogonal row cluster basis $\mathcal{U}:=\{U_\ct\}_{\ct\in\Lrc}$}
\For{$\ct\in\Lrc$}{
    \Init $A_\tau \in \C^{\ct\times\mathcal{F}(\ct)}$ \DontPrintSemicolon \Comment*[l]{To be filled}
    \For{$\cs\in\row{\ct}$}{
        $A_{\tau}(:,\cs) \leftarrow A(\ct,\cs)$
    }
    $[U,\Sigma,V]\leftarrow\textsc{SVD}(A_\ct)$ \\
    Choose optimal rank $\lt$ according to $\epsilon_\ct$ \\
    $U_\ct \leftarrow U(:,1:\lt)$
}
 \caption{Construction of optimal orthogonal row cluster basis}
 \label{alg:clusterBasisConstr}
\end{algorithm2e}

\subsection{A log-linear algorithm for the compression of $\Hm$-matrices}
The compression as presented in \cref{alg:clusterBasisConstr} leads to a quadratic cost $\bigO(|I|\cdot|J|)$ when applied to all row and column clusters of $A\in\C^\ItJ$. This is due to the high cost of computing the SVD of the agglomeration matrices, of which the dimensions are proportional to the size of the corresponding cluster $\ct$ and its `far field' $\mathcal{F}(\ct)$. By exploiting the fact that $A$ is an $\Hm$-matrix, the cost can be improved to being log-linear in the size.

For agglomeration matrix $A_\ct$ of an $\Hm$-matrix $A$, we have
\begin{equation*}
    \restr{A_\ct}{b} = \restr{A}{b} = X_bY_b^* \quad\text{for all}~\cs\in\row{\ct}~\text{with}~b=\ts.
\end{equation*}
Consider the QR-decomposition of $Y_b\in\C^{\cs\times k_b}$. It can be cheaply computed because $Y_b$ is thin, giving us
\begin{equation*}
    \restr{A}{b} = X_bY_b^* = X_b R_{Y,b}^*Q_{Y,b}^*
\end{equation*}
with $Q_{Y,b}$ orthogonal. Carrying this out for all $\cs\in\row{\ct}$ and enumerating $\row{\ct}:=\{\cs_1,\dots,\cs_\nu\}$, $\nu:=\nu(\ct)=|\row{\ct}|$, the agglomeration matrix $A_\ct$ is decomposed:
\begin{equation}\label{eq:agglomeration_decomp}
\begin{aligned}
    A_\ct &= \left(\begin{array}{ccc}
                X_{b_1}Y_{b_1}^* & \dots & X_{b_\nu}Y_{b_\nu}^*  \\
            \end{array}\right) & (b_i=\ts_i) \\
        &= \left(\begin{array}{ccc}
                X_{b_1}R_{Y,b_1}^*Q_{Y,b_1}^* & \dots & X_{b_\nu}R_{Y,b_\nu}^*Q_{Y,b_\nu}^*  \\
            \end{array}\right) \\
        &= \left(\begin{array}{ccc}
                X_{b_1}R_{Y,b_1}^* & \dots & X_{b_\nu}R_{Y,b_\nu}^*  \\
            \end{array}\right)
            \left(\begin{array}{ccc}
                Q_{Y,b_1} && \\ & \ddots & \\ && Q_{Y,b_\nu}  \\
            \end{array}\right)^* 
        &=: X_\ct Q_\ct^* .
\end{aligned}
\end{equation}
Due to the orthogonality of $Q_{Y,b_i}$, $i=1,\dots,\nu$, the block-diagonal matrix $Q_\ct$ is also orthogonal. The SVD of the skinny matrix $X_\ct\in\C^{\ct\times \kt}$ is cheaper to compute as $\kt:=\sum_{i=1}^\nu k_{b_i}\le \nu \kmax \le \csp \kmax$ is bounded independently of the size of the matrix. Composition of this SVD with the adjoint of $Q_\ct$ results in the SVD of the full agglomeration matrix $A_\ct$. Thus, $X_\ct$ can be used in place of $A_\ct$ to find $U_\ct$.

\begin{remark}[Difference with $\Hm^2$-matrix compression]\label{rmk:comparison-H2}
In ${\mathcal H}^2$-recompression the cluster bases are nested, but for $\UHm$-matrices they are not. That means the definition of $X_\ct$ in the current setting can be non-recursive, i.e., it does not span multiple levels.
\end{remark}

After the computation of the rank-revealing SVDs, 
\begin{align*}
    X_\ct &= \svdU_1 \svdS_1 \svdV_1^*= \left(\begin{array}{cc}\hat{\svdU}_1 & \svdU_1^{(r)} \end{array}\right)\left(\begin{array}{cc}\hat{\svdS}_1 & \\ &\svdS_1^{(r)} \end{array}\right) \left(\begin{array}{cc}\hat{\svdV}_1 & \svdV_1^{(r)} \end{array}\right)^*, \\ 
    Y_\cs &= \svdU_2 \svdS_2 \svdV_2^* = \left(\begin{array}{cc}\hat{\svdU}_2 & \svdU_2^{(r)} \end{array}\right)\left(\begin{array}{cc}\hat{\svdS}_2 & \\ &\svdS_2^{(r)} \end{array}\right) \left(\begin{array}{cc}\hat{\svdV}_2 & \svdV_2^{(r)} \end{array}\right)^* \\ 
&\text{with}\quad \hat{\svdU}_1\in\C^{\ct\times \lt},~\lt \le \kt \quad\text{and}\quad \hat{\svdU}_2\in\C^{\cs\times \ls},~\ls \le \ks,
\end{align*} 
the optimal coefficient matrix $S_b$ for submatrix $\restr{A}{b}=X_bY_b^*$ with $b=\ts$ is obtained through left-multiplication of $X_b$ and $Y_b$ with the adjoint of $U_\ct:=\hat{\svdU}_1$ and $V_\cs:=\hat{\svdU}_2$ respectively. The result is $S_b:=S_{X,b}S_{Y,b}^* \in \C^{\lt \times \ls}$ where
\begin{align*}
    \hat{\svdU}_1^* X_b &= \hat{\svdU}_1^* X_b \left( R_{Y,b}^{*} {(R_{Y,b}^*)}^{-1} \right) & \hat{\svdU}_2^* Y_b &= \hat{\svdU}_2^* Y_b \left( R_{X,b}^{*} {(R_{X,b}^*)}^{-1} \right) \\
    &=\hat{\svdU}_1^* \left( X_b R_{Y,b}^{*} \right) {(R_{Y,b}^*)}^{-1}  & &=\hat{\svdU}_2^* \left( Y_b R_{X,b}^{*} \right) {(R_{X,b}^*)}^{-1}\\
    &= \hat{\svdU}_1^* \restr{X_\ct}{b} {(R_{Y,b}^*)}^{-1} & &= \hat{\svdU}_2^* \restr{Y_\cs}{b} {(R_{X,b}^*)}^{-1} \\
    &= \hat{\svdU}_1^* \svdU_1 \svdS_1 \restr{\svdV_1^*}{b} {(R_{Y,b}^*)}^{-1} & &= \hat{\svdU}_2^* \svdU_2 \svdS_2 \restr{\svdV_2^*}{b} {(R_{X,b}^*)}^{-1} \\
    &= \hat{\svdS}_1 \restr{\hat{\svdV}_1^*}{b} {(R_{Y,b}^*)}^{-1}=: S_{X,b}
    		& &= \hat{\svdS}_2 \restr{\hat{\svdV}_2^*}{b} {(R_{X,b}^*)}^{-1}=: S_{Y,b} .
\end{align*}
Using the last line above to compute the coefficient matrices is more efficient as it involves only small matrices. In contrast, the inner dimension of the product $\hat{\svdU}_1^* X_b$ is of the size $|\ct|$, even though $S_{X,b}$ is only of size $\lt\times\kb$, similarly for $S_{Y,b}$.
Note the slight abuse of notation in $\restr{X_\ct}{b}$ which here does not denote the restriction of $X_\ct$ to $b=\ts$ but rather the subset of columns corresponding to $b$ according to the definition of $X_\ct$.

Combining the steps of the QR-factorizations, the computation of the truncated SVD and the assembly of the coefficient matrices over all the row and column clusters, yields \cref{alg:unfCompression}. Note that we opt to keep the coefficient matrices in their factorized form $S_b:=S_{X,b}S_{Y,b}^*$, requiring $\kb(\lt + \ls)$ elements instead of $\lt \ls$. This choice does not significantly affect the total storage cost or eventual cost of the matrix-vector product as they are dominated by the costs due to the cluster basis matrices. Opting to store the coefficient matrices directly requires an additional iteration over all $b\in P^+_\ItJ$ to assemble $S_b$ from its factors.

\begin{algorithm2e}[tbhp]
\SetKwInOut{Input}{input}
\SetKwInOut{Output}{output}
\SetKw{Init}{init}{}{}
\SetKwComment{Comment}{// }{}
\SetAlgoLined
\Input{$\Hm$-matrix $A$ with admissible blocks $\{(X_b,Y_b)\}_{b\in P^+_\ItJ}$ and tolerances $\{\epsilon_\ct\}_{\ct\in\Lrc}$ and $\{\epsilon_\cs\}_{\ct\in\Lcc}$.}
\Output{Uniform $\Hm$-matrix approximation $A^\UHm$ with $\mathcal{U}=\{U_\ct\}_{\ct\in\Lrc}$, $\mathcal{V}=\{V_\cs\}_{\ct\in\Lcc}$ and $\{S_{X,b},S_{Y,b}\}_{b\in P^+_\ItJ}$}
\For{$\ct\in\Lrc$}{
    \Init $X_\ct \in \C^{\ct\times \kt}$ \DontPrintSemicolon \Comment*[l]{To be filled}
    $i \leftarrow 0$ \\
    \For{$\cs\in\row{\ct}$}{
        $b \leftarrow \ts$ \\
        $[Q,R] \leftarrow \textsc{QR}(Y_b)$ \\
        $R_{Y,b} \leftarrow R$ \\
        $X_\ct(:,i:i+k_b) \leftarrow X_b R_{Y,b}^*$ \\
        $i \leftarrow i+k_b$
    }
    $[U,\Sigma,V]\leftarrow\textsc{SVD}(X_\ct)$ \\
    Choose optimal rank $\lt$ according to $\epsilon_\ct$ \\
    $U_\ct \leftarrow U(:,1:\lt)$ \\
    $i \leftarrow 0$ \\
    \For{$\cs\in\row{\ct}$}{
        $b \leftarrow \ts$ \\
        $S_{X,b} \leftarrow \Sigma(1:\lt,1:\lt)V(i:i+k_b,1:\lt)^*$ \\
        $i \leftarrow i+k_b$ \\
        Solve lower-triangular system $S_{X,b}{(R_{Y,b}^{*})}^{-1}$ in-place
    }
}
\textbf{repeat \small{2--20} for} $\cs\in\Lcc$ to obtain $\mathcal{V}$ and $S_{Y,b}$
 \caption{Uniform compression of an $\Hm$-matrix}
 \label{alg:unfCompression}
\end{algorithm2e}

The operation count of the compression depends on the cost of the QR and SVD decompositions, of the triangular matrix-matrix product (TRMM), the diagonal scaling with the singular values and a  triangular (block) system solve (TRSM). These give the following operation counts:
\begin{itemize}
    \item QR takes no more than $C_{\mathrm{qr}}mn^2$ for an $m\times n$ matrix with $m>n$ \cite[\S5]{GolubLoan2013};
    \item the SVD of the same matrix 
    takes no more than $C_{\mathrm{svd}}^{(1)}mn^2 + C_{\mathrm{svd}}^{(2)}n^3$ \cite[\S8.6]{GolubLoan2013};
    \item both TRMM and TRSM \cite[\S3.1]{GolubLoan2013} require $mn^2$ operations for two matrices of sizes $m\times n$ and $n\times n$;
    \item diagonally scaling an $m\times n$ matrix takes $mn$ operations.
\end{itemize}

This leads to the following statement.
\begin{lemma}[Complexity of compression]\label{lem:comprComplexity}
    Given a $\Hm$-matrix $A$ with (block) cluster trees $\tree_I$, $\tree_J$ and $\tree_\ItJ$, and admissible partition $P^+_\ItJ=\leaves^+(\tree_\ItJ)$, there exist positive constants $C_{\mathrm{qlin}},C_{\mathrm{lin}}\in\R$ such that \cref{alg:unfCompression} requires no more than
    \begin{equation*}
        C_{\mathrm{qlin}} (\csp \kmax)^2 \big(\, L(\tree_I)|I| + L(\tree_J)|J| \,\big) ~+~ C_{\mathrm{lin}} (\csp \kmax)^3 \big(\, |I|+|J| \,\big)
    \end{equation*}
    operations, where $k_{\max}$ is the maximum block rank of $A$. Considering the BEM setting with $L(\tree_I)\sim\log|I|$ and $L(\tree_J)\sim\log|I|$, and $\csp$ and $\kmax$ bounded, this yields a complexity of $\bigO(|I|\log|I|+|J|\log|J|)$ for increasing dimension of the BEM matrix.
\end{lemma}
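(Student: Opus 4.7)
The proof plan is to tally the operation count incurred within one outer-loop iteration of Algorithm~\ref{alg:unfCompression}, sum over $\ct\in\Lrc$, and repeat symmetrically for the column pass.

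For a fixed row cluster $\ct\in\Lrc$, I would enumerate the work using the cost models listed just above the lemma. For each $\cs\in\row{\ct}$ with $b=\ts$: the QR of $Y_b\in\C^{\cs\times k_b}$ costs $C_{\mathrm{qr}}|\cs|k_b^2$ and the TRMM forming $X_b R_{Y,b}^*$ costs $|\ct|k_b^2$. Since $\kt=\sum_{\cs\in\row{\ct}}k_b\le\csp\kmax$, the matrix $X_\ct$ has shape $|\ct|\times\kt$, and its SVD costs at most $C_{\mathrm{svd}}^{(1)}|\ct|\kt^2+C_{\mathrm{svd}}^{(2)}\kt^3$. The assembly step performs a diagonal scaling ($\lt k_b$ operations) and a TRSM ($\lt k_b^2$ operations) for each of the at most $\csp$ blocks in $\row{\ct}$, with $\lt\le\kt\le\csp\kmax$.

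Next, I would aggregate these costs over $\ct\in\Lrc$. Swapping the order of summation and using $|\row{\ct}|\le\csp$ gives $\sum_{\ct\in\Lrc}\sum_{\cs\in\row{\ct}}|\cs|\le\csp\sum_{\cs\in\Lcc}|\cs|\le\csp L(\tree_J)|J|$ by the level-by-level argument already used in the proof of Lemma~\ref{lem:storage_cost}, so the QR term contributes $\bigO((\csp\kmax)^2\, L(\tree_J)|J|)$. The TRMM and leading SVD terms are weighted by $|\ct|$, and $\sum_{\ct\in\Lrc}|\ct|\le L(\tree_I)|I|$ by the same argument, yielding a contribution of $\bigO((\csp\kmax)^2 L(\tree_I)|I|)$. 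The cubic-in-$\kt$ SVD term and the TRSM/scaling contributions amount to a constant-per-cluster cost bounded by $(\csp\kmax)^3$, and since $|\Lrc|$ is bounded by the number of admissible leaves, which by the remark after Lemma~\ref{lem:storage_cost} is at most $2\csp\min\{|I|,|J|\}/n_{\min}$ and therefore linear in $|I|$, these aggregate to $\bigO((\csp\kmax)^3 |I|)$ after absorbing $1/n_{\min}$ into $C_{\mathrm{lin}}$.

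Adding the symmetric $|J|$-contributions from the column-cluster pass yields the claimed estimate. The BEM corollary then follows by substituting $L(\tree_I)\sim\log|I|$, $L(\tree_J)\sim\log|J|$ and the boundedness of $\csp,\kmax$ recalled at the end of Section~\ref{sec:3_bem}. The main bookkeeping obstacle I expect is being consistent about which dimension appears in each sum after the swap of summation order, so that the QR cost is bounded in terms of $L(\tree_J)|J|$ rather than $L(\tree_I)|I|$, and confirming that all cubic-in-$(\csp\kmax)$ contributions really combine into a purely linear $|I|+|J|$ factor rather than a log-linear one.
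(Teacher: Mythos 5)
Your proposal matches the paper's proof: the same per-cluster cost tally from the listed operation-count models, the same swap of summation order via $\sum_{\ct\in\Lrc}\sum_{\cs\in\row{\ct}}=\sum_{\cs\in\Lcc}\sum_{\ct\in\col{\cs}}$, the same level-wise bounds $\sum_{\ct\in\Lrc}|\ct|\le L(\tree_I)|I|$, and the same split into a log-linear (QR/TRMM/SVD-leading) bucket and a linear (cubic-SVD-tail/TRSM/scaling) bucket. The only cosmetic difference is that you bound $|\Lrc|$ by the number of admissible leaves while the paper bounds it directly by the number of clusters $2|I|/n_{\min}$; both yield the same linear-in-$|I|$ estimate.
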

\begin{proof}
The number of operations needed for one row cluster $\ct\in\Lrc$ in \cref{alg:unfCompression} is
\begin{align*}
    & \sum_{\cs\in\row{\ct}}\left( C_{\mathrm{qr}}|\cs|k_{\ts}^2 + |\ct|k_{\ts}^2 \right) ~+~ C_{\mathrm{svd}}^{(1)}|\ct|{(\kt)}^2 + C_{\mathrm{svd}}^{(2)}{(\kt)}^3 \\&+~ \sum_{\cs\in\row{\ct}}\left( \lt k_\ts + \lt k_\ts^2 \right)
\end{align*}
which is bounded from above by
\begin{align*}
    & \sum_{\cs\in\row{\ct}}\left(C_{\mathrm{qr}}\kmax^2|\cs|\right) ~+~ \csp \kmax^2 |\ct| ~+~ C_{\mathrm{svd}}^{(1)}(\csp \kmax)^2|\ct| + C_{\mathrm{svd}}^{(2)}(\csp \kmax)^3 \\&+~ \csp \left(( \csp \kmax) \kmax + (\csp \kmax) \kmax^2 \right)
\end{align*}
with $\kt=\sum_{\cs\in\row{\ct}}k_{\ts}\le \csp \kmax$. Summing over all $\ct\in\Lrc$ and $\cs\in\Lcc $ and subsequently employing 
$\sum_{\ct\in\Lrc}\sum_{\cs\in\row{\ct}}=\sum_{\ts\in P^+_\ItJ} = \sum_{\cs\in\Lcc}\sum_{\ct\in\col{\cs}}$ and upper bounds $\sum_{\ct\in\Lrc}|\ct|\le L(\tree_I)|I|$ and $\sum_{\ct\in\Lrc}1 \le 2|I|/n_\mathrm{min}$, yields
\begin{multline*}
    \left( C_{\mathrm{qr}} + \csp C_{\mathrm{svd}}^{(1)} + 1 \right) \csp \kmax^2 \big(\, L(\tree_I)|I| + L(\tree_J)|J| \,\big) \\ + \left( \csp \kmax C_{\mathrm{svd}}^{(2)} + \kmax + 1 \right) \frac{2\csp^2 \kmax^2}{n_\mathrm{min}} \big(\, |I|+|J| \,\big).
\end{multline*}
Choosing $C_{\mathrm{qlin}}:=C_{\mathrm{svd}}^{(1)} + \frac{C_{\mathrm{qr}}+1}{\csp}$ and $C_{\mathrm{lin}}:=\frac{2}{n_\mathrm{min}}\left( C_{\mathrm{svd}}^{(2)} + \frac{1}{\csp} + \frac{1}{\csp \kmax}\right)$
proves the lemma.
\end{proof}

\subsection{Direct $\UHm$-matrix construction from matrix entries}
The algorithms as presented thus far, applied to an $\Hm$-matrix approximation $A^\Hm$, result in both matrices $A^\Hm$ and $A^\UHm$ being in memory simultaneously, which may present a bottleneck. Yet, the construction of a $\Hm$-matrix and its compression into $\UHm$-matrix format can also be combined, starting from entries of the original matrix $A$.

A straightforward approach to realize the direct construction of a $\UHm$-matrix is to incorporate the assembly of the admissible blocks of the $\Hm$-matrix into \cref{alg:unfCompression}. The factorization $X_bY_b^*$ of each block $b=\ts$ is computed when it is first needed. Once $X_b$ is used to assemble $X_\ct$ it can be thrown away and similarly for $Y_b$ and $Y_\cs$. 

The sets of row and column clusters $\Lrc$ and $\Lcc$ are sorted simultaneously from large to small for two reasons:
\begin{itemize}
    \item Applying the uniform compression to each cluster in this order ensures that larger clusters, and thus also larger blocks, are processed early.
    \item $\restr{A}{b}\approx X_bY_b^*$ is computed when either the row or column cluster is being processed. However, the factorization can only be completely thrown away once both clusters are fully processed. As the row and column cluster of a block can be expected to be of similar size, the clusters will be sorted close together, reducing the time $X_b$ or $Y_b$ spends in memory.
\end{itemize}
Subsequently, the maximum amount of memory in use over the course of the construction is expected to be close to the memory cost of the complete $\UHm$-matrix.

\Cref{alg:unfConstruction-helper} gives the procedure to compute the cluster basis matrix $U_\ct$ of a row cluster $\ct$ and the corresponding coefficient matrices $\{S_{X,\ts}\}_\ts$. Tolerances for the truncation of $X_\ct$'s SVD and for the ACA approximations of the blocks are provided as inputs. It is assumed that the ACA algorithm incorporates \emph{algebraic recompression} into $U_b\Sigma_bV_b^*$ (see \cref{ssec:2.3_ACA}) such that the agglomeration matrices can be defined as $X_{\ct|b}:=U_b\Sigma_b$ and $Y_{\cs|b}:=V_b\Sigma_b$.\footnote{Using $X_{\ct|b}:=U_b\Sigma_b$ differs from the standard definition of $X_{\ct|b}:=X_bR_{Y,b}^*$ in \cref{eq:agglomeration_decomp}. Multiplying with $\Sigma_b$ acts as a scaling -- similarly to multiplying with $R_{Y,b}^*$ -- to enable the error analysis later on.}

The inputs $\{(U_\ts,\Sigma_\ts)\}_{\cs\in\row{\ct}}$ to \cref{alg:unfConstruction-helper} allow to provide factors of blocks that were already treated by their column cluster. Conversely, outputs $\{(V_\ts,$ $\Sigma_\ts)\}_{\cs\in\row{\ct}}$ return factors that are still needed. Line 4 checks whether the inputs are provided on a block-by-block basis. Lines 8 and 17--19 ensure that factors are thrown away once they are fully processed. 

\begin{algorithm2e}[tbhp]
\SetKwInOut{Input}{input}
\SetKwInOut{Output}{output}
\SetKw{Init}{init}{}{}
\SetKwComment{Comment}{// }{}
\SetAlgoLined
\nonl\textbf{procedure} \textsc{BuildCluster} \\
\Input{Matrix $A : \ItJ \to \C : (i,j) \mapsto a_{ij}$, row cluster $\ct$, tolerances $\epsilon_\ct$ and $(\epsilon_\ts)_{\cs\in\row{\ct}}$, and $\{(U_\ts,\Sigma_\ts)\}_{\cs\in\row{\ct}}$}
\Output{Basis matrix $U_\ct$, coefficients $\{S_{X,\ts}\}_{\cs\in\row{\ct}}$ and $\{(V_\ts,\Sigma_\ts)\}_{\cs\in\row{\ct}}$}
\Init $X_\ct \in \C^{\ct\times \kt}$ \DontPrintSemicolon \Comment*[l]{To be filled}
$i \leftarrow 0$ \\
\For{$\cs\in\row{\ct}$}{
    \If{$(U_\ts,\Sigma_\ts) = \emptyset$}
    {
        $[U_\ts,\Sigma_\ts,V_\ts] \leftarrow \textsc{ACA}(\restr{A}{\ts},\epsilon_\ts)$
    }
    $X_\ct(:,i:i+k_\ts) \leftarrow U_\ts \Sigma_\ts$ \\
    $U_\ts \leftarrow \emptyset$ \\
    $i \leftarrow i+k_\ts$
}
$[U,\Sigma,V]\leftarrow\textsc{SVD}(X_\ct)$ \\
Choose optimal rank $\lt$ according to $\epsilon_\ct$ \\
$U_\ct \leftarrow U(:,1:\lt)$ \\
$i \leftarrow 0$ \\
\For{$\cs\in\row{\ct}$}{
    $S_{X,\ts} \leftarrow \Sigma(1:\lt,1:\lt)V(i:i+k_\ts,1:\lt)^*\Sigma_\ts^{-1/2}$ \\
    \If{$V_\ts=\emptyset$}
    {
        $\Sigma_\ts \leftarrow \emptyset$
    }
    $i \leftarrow i+k_\ts$ \\
}
 \caption{Construction of a row cluster of a $\UHm$-matrix}
 \label{alg:unfConstruction-helper}
\end{algorithm2e}

\begin{remark}[Alternative rank-revealing decompositions]\label{rmk:rr-decomp}
	\Cref{alg:unfConstruction-helper} uses the standard truncated SVD to determine the optimal cluster basis matrix and corresponding rank, as first proposed in \cref{ssec:4.1_probstat}. However, other rank-revealing decompositions are also valid. This can reduce the compression time at the expense of optimality of the ranks. One alternative is a rank-revealing QR-decomposition; column-pivoted QR with early-exit has $\bigO(mnk)$ complexity for an $m$-by-$n$ matrix of rank $k$ while the SVD has $\bigO(mn^2)$ complexity regardless of the obtained rank. Another option is a low-rank decomposition based on randomized linear algebra, see \cite[\S4]{Murray2023} and references therein.
\end{remark}

\Cref{alg:unfConstruction} details the complete construction of a $\UHm$-matrix by applying the procedure from \cref{alg:unfConstruction-helper} to all clusters.\footnote{The construction of the inadmissible blocks is left out, as this is just evaluating and storing all the corresponding submatrices.} For the column clusters in $\Lcc$, the adjoint of matrix $A$ is supplied to the subroutine and arguments are swapped appropriately.

\begin{algorithm2e}[ht]
\SetKwInOut{Input}{input}
\SetKwInOut{Output}{output}
\SetKw{Init}{init}{}{}
\SetAlgoLined
\Input{Matrix $A : \ItJ \to \C : (i,j) \mapsto a_{ij}$, partition $P^+_\ItJ$ and tolerances $(\epsilon_\ct)_{\ct\in\Lrc}$, $(\epsilon_\cs)_{\cs\in\Lcc}$ and $(\epsilon_b)_{b\in P^+_\ItJ}$}
\Output{$\UHm$-matrix $A^\UHm$ with $\mathcal{U}=\{U_\ct\}_{\ct\in\Lrc}$, $\mathcal{V}=\{V_\cs\}_{\cs\in\Lcc}$ and $\{S_{X,b},S_{Y,b}\}_{b\in P^+_\ItJ}$}
$\mathcal{L}' \leftarrow \textsc{sort}(\Lrc,\Lcc)$ \\
$\{(U_\ts,\Sigma_\ts,V_\ts)\}_{\ts\in P^+_\ItJ} \leftarrow \emptyset$ \\
\For{$\ct\in\mathcal{L}'$}{
    \If{$\ct \in \Lrc$}{
        $\left[U_\ct, \{S_{X,\ts}\}_{\cs\in\row{\ct}},\{(V_\ts,\Sigma_\ts)\}_{\cs\in\row{\ct}}\right] \leftarrow$ \\ \Indp $\textsc{BuildCluster}(A,\ct,\epsilon_\ct,(\epsilon_\ts)_{\cs\in\row{\ct}},\{(U_\ts,\Sigma_\ts)\}_{\cs\in\row{\ct}})$
    }\Else{
        $\left[V_\ct, \{S_{Y,\st}\}_{\cs\in\col{\ct}},\{(U_\st,\Sigma_\st)\}_{\cs\in\col{\ct}}\right] \leftarrow$ \\ \Indp $\textsc{BuildCluster}(A^*,\ct,\epsilon_\ct,(\epsilon_\st)_{\cs\in\col{\ct}},\{(V_\st,\Sigma_\st)\}_{\cs\in\col{\ct}})$
    }
}
 \caption{Construction of a uniform $\Hm$-matrix}
 \label{alg:unfConstruction}
\end{algorithm2e}

\begin{remark}[Cost of initial $\Hm$-matrix construction]\label{rmk:cost_hmat_constr}
    The computational cost of the initial $\Hm$-matrix construction has a similar operation count as that of \cref{lem:comprComplexity} but with other constants. In Galerkin BEM, the most expensive part is the sampling of the matrix elements. This contributes $C_{\mathrm{aca}}^{(1)}C_\mathrm{elem}\csp \kmax \big(\, L(\tree_I)|I| + L(\tree_J)|J| \,\big)$ operations. While the uniform compression contains an additional factor of $\csp \kmax$, $C_{\mathrm{aca}}^{(1)}C_\mathrm{elem}$ is generally much larger than $C_{\mathrm{qlin}}$. Thus, it may be expected that the relative additional cost of uniform compression is small.
\end{remark}

\begin{remark}[Symmetry]\label{rmk:symmetry}
    If the matrix $A\in\C^\ItJ$ is symmetric (or hermitian), and naturally $I=J$, it is customary to choose $\tree_I=\tree_J$. One can then define a single set of clusters 
    \begin{equation}\label{eq:symmetry}
        \Lrc(\tree_\ItI):=\{\ct\in\tree_I : \exists\cs\in\tree_I~~\text{s.t.\ }~\ts\in\leaves^+(\tree_\ItI) \vee \cs\times\ct\in\leaves^+(\tree_\ItI)  \}
    \end{equation}
    and combine the corresponding $\{X_{\ts}\}_{\cs\in\row{\ct}}$ and $\{Y_{\cs\times\ct}\}_{\cs\in\col{\ct}}$ in one single $X_\ct$. If block cluster tree $\tree_\ItI$ is itself symmetric, only admissible blocks below or above the diagonal have to be computed to be used in place of their counterpart above (or below) the diagonal. Even if $A$ is neither symmetric nor hermitian but $I=J$, it is still an option to join the clusters into one set \cref{eq:symmetry} if $\tree_I=\tree_J$.
\end{remark}

\subsection{Parallel construction and matrix-vector product}\label{ssec:4.4_parallel}
\Cref{alg:unfConstruction} provides a sequential program to construct a $\UHm$-matrix. However, to utilize current computer architectures more effectively, a parallel implementation is necessary. In this section we describe a parallel implementation assuming shared-memory parallelism. Following additions/modifications are applied:
\begin{itemize}
    \item the for-loop from lines 3-12 in \cref{alg:unfConstruction} is distributed over the threads, taking load balancing into account;
    \item locks are initialized at the start of the program, one for each admissible block;
    \item to execute lines 4-6 of \cref{alg:unfConstruction-helper}, a thread needs to acquire the corresponding lock of the admissible block. If another thread has already acquired the lock, the current thread waits until the lock is released;
    \item once released, the factorization of the block is guaranteed to have been computed by the other thread. Thus, the thread can continue at line 7.
\end{itemize}
In theory, the locking mechanism can slow down the parallel computation of the block factorizations by a factor of two in the worst case, namely when for each block, either the column cluster has to wait on the row cluster or vice versa. While the numerical experiments suggest that this is not an issue in the current setup, many more CPU cores, relative to the number of blocks and clusters, may result in significant \emph{thread blocking}.

The matrix-vector products also benefit from a parallel implementation. \Cref{alg:hmatvec,alg:uhmatvec} provide a parallel matrix-vector product for the $\Hm$-matrix and $\UHm$-matrix in a shared-memory setting.

\begin{algorithm2e}[tbhp]
\SetKwInOut{Input}{input}
\SetKwInOut{Output}{output}
\SetKw{Init}{init}{}{}
\SetAlgoLined
\Input{Hierarchical matrix $A$, vector $\vv\in\C^{J}$ and number of threads $p$}
\Output{$\vw=A\vv\in\C^{I}$}
$\{\mathcal{P}^{(q)}(P_\ItJ^+)\}_{q=0}^p\gets P_\ItJ^+$ (Distribute admissible blocks) \\
$\{\mathcal{P}^{(q)}(P_\ItJ^-)\}_{q=0}^p\gets P_\ItJ^-$ (Distribute dense blocks) \\
\SetKwBlock{Parfor}{parfor}{end}
\Parfor($0<q<p$){
    \Init $\vw^{(q)}=\mathbf{0}\in\C^{I}$ \\
    \For{$\ts\in\mathcal{P}^{(q)}(P_\ItJ^+)$}{
        $\restr{\vw^{(q)}}{\ct} \gets \restr{\vw^{(q)}}{\ct} + X_\ts Y_\ts^* \restr{\vv}{\cs}$
    }
    \For{$\ts\in\mathcal{P}^{(q)}(P_\ItJ^-)$}{
        $\restr{\vw^{(q)}}{\ct} \gets \restr{\vw^{(q)}}{\ct} + \restr{A}{\ts}\restr{\vv}{\cs}$
    }
}
$\vw \gets \sum \vw^{(q)}$ (parallel reduction)
    
 \caption{Parallel $\Hm$-matrix-vector product}
 \label{alg:hmatvec}
\end{algorithm2e}

\begin{algorithm2e}[tbhp]
\SetKwInOut{Input}{input}
\SetKwInOut{Output}{output}
\SetKw{Init}{init}{}{}
\SetKw{Sync}{sync}{}{}
\SetAlgoLined
\Input{$\UHm$-matrix $A$, vector $\vv\in\C^{J}$ and number of threads $p$}
\Output{$\vw=A\vv\in\C^{I}$}
$\{\mathcal{P}^{(q)}(\Lrc)\}_{q=0}^p\gets \Lrc$ (Distribute row clusters) \\
$\{\mathcal{P}^{(q)}(\Lcc)\}_{q=0}^p\gets \Lcc$ (Distribute column clusters) \\
$\{\mathcal{P}^{(q)}(P_\ItJ^-)\}_{q=0}^p\gets P_\ItJ^-$ (Distribute dense blocks) \\
\SetKwBlock{Parfor}{parfor}{end}
\Parfor($0<q<p$){
    \For{$\cs\in\mathcal{P}^{(q)}(\Lcc)$}{
        $\vu_\cs \gets V_\cs^* \restr{\vv}{\cs}$ \\
        \For{$\ct\in\col{\cs}$}{
            $\vu_\ts \gets S_{Y,\ts}^* \vu_\cs$
        }
    }
}
\SetKwBlock{Parfor}{parfor}{end}
\Parfor($0<q<p$){
    \Init $\vw^{(q)}=\mathbf{0}\in\C^{I}$ \\
    \For{$\ct\in\mathcal{P}^{(q)}(\Lrc)$}{
        \Init $\vu_\ct = \mathbf{0} \in \C^{\ct}$ \\
        \For{$\ct\in\col{\cs}$}{
            $\vu_\ct \gets \vu_\ct + S_{X,\ts} \vu_\ts$
        }
        $\restr{\vw^{(q)}}{\ct} \gets \restr{\vw^{(q)}}{\ct} + U_\ct \vu_\ct$
    }
    \For{$\ts\in\mathcal{P}^{(q)}(P_\ItJ^-)$}{
        $\restr{\vw^{(q)}}{\ct} \gets \restr{\vw^{(q)}}{\ct} + \restr{A}{\ts}\restr{\vv}{\cs}$
    }
}
$\vw \gets \sum \vw^{(q)}$ (parallel reduction)
    
 \caption{Parallel $\UHm$-matrix-vector product}
 \label{alg:uhmatvec}
\end{algorithm2e}

For the $\Hm$-matrix version, both the admissible and dense blocks are distributed over the threads. We assume that dynamic load balancing is applied to both sets, where they are ordered from large to small. This means that distribution of the blocks is done during the execution of the parallel loop. An improvement would be to do static load balancing as a preprocessing step as the time each block takes can be estimated based on the computational cost. We refer to \cite{BebendorfKriemann2005,Hoshino2022} for more details on parallelization improvements.

The $\UHm$-matrix-vector product follows the same idea. Here, the admissible blocks are not distributed, but the row and column clusters are. The multiplication with the admissible blocks is now achieved in a two-step process. First the appropriate segment of the input vector is multiplied by $V_\cs^*$ after which it is multiplied by $S_{Y,\ts}^*$ for each block $\ts$. This result is stored in a temporary vector $\vu_\ts \in \C^{k_{\ts}}$. In the second step, this vector is multiplied by $S_{X,\ts}$, summed with contributions of the other blocks with the same row cluster and multiplied by $U_\ct$. The use of $\vu_\ts$ results in a minimal, implicit communication step between the thread that processes column cluster $\cs$ and the thread that processes row cluster $\ct$.

In both matrix-vector products each matrix element stored in the $\Hm$- or $\UHm$-matrix representation is used at most twice, indicating $\bigO(N\log N)$ complexity. This also means that the relative memory reduction in applying the uniform compression may directly result in similar relative speed-up of the matrix-vector product.

\section{Error analysis}\label{sec:5_analysis}

The compression described in the previous section ensures local, block-wise errors on the agglomeration matrices $A_\ct$ and $A^*_\cs$. Still, it is also important to guarantee a global error on the complete matrix $A$. The most convenient norm for this analysis is the Frobenius norm:
\begin{equation*}
\begin{aligned}
    \norm{A-A^{\UHm}}_F^2 &= \sum_{b=\ts\in P_\ItJ^+}\norm{\restr{A}{b}-U_\ct U_\ct^*\restr{A}{b}V_\cs V_\cs^*}_F^2 \\
    &= \sum_{b=\ts\in P_\ItJ^+}\norm{\restr{A}{b} - U_\ct U_\ct^*\restr{A}{b} + U_\ct U_\ct^*\restr{A}{b} - U_\ct U_\ct^*\restr{A}{b}V_\cs V_\cs^*}_F^2 \\
    &= \sum_{b=\ts\in P_\ItJ^+}\norm{\restr{A}{b} - U_\ct U_\ct^*\restr{A}{b}}_F^2 + \norm{U_\ct U_\ct^*(\restr{A}{b} - \restr{A}{b}V_\cs V_\cs^*)}_F^2 \\
    &\le \sum_{b=\ts\in P_\ItJ^+}\norm{\restr{A}{b} - U_\ct U_\ct^*\restr{A}{b}}_F^2 + \norm{\restr{A}{b} - \restr{A}{b}V_\cs V_\cs^*}_F^2 \\
    &= \sum_{\ct\in\Lrc}\sum_{\cs\in\row{\ct}}\norm{\restr{A}{b} - U_\ct U_\ct^*\restr{A}{b}}_F^2 + \sum_{\cs\in\Lcc}\sum_{\ct\in\col{\cs}}\norm{\restr{A}{b} - \restr{A}{b}V_\cs V_\cs^*}_F^2 \\
    &= \sum_{\ct\in\Lrc}\norm{A_\ct - U_\ct U_\ct^*A_\ct}_F^2 + \sum_{\cs\in\Lcc}\norm{A^*_\cs -V_\cs V_\cs^*A^*_\cs}_F^2.
\end{aligned}
\end{equation*}
The block-wise errors directly translate into the global error.

\Cref{thm:spectral_error} below provides a bound for the spectral norm. The analysis here is similar to the one for general norms in \cite[Lemma 6.26]{Hackbusch2015book}. The result of the theorem can also be extended to other norms when orthogonality in such norm is assumed.
\begin{theorem}[Global error of the $\UHm$-matrix compression]\label{thm:spectral_error}
    Given an $\Hm$-matrix $A\in\C^\ItJ$ and the approximation $A^\UHm$ computed by \cref{alg:unfCompression}, the global and local errors in the spectral norm satisfy
    \begin{equation}\label{eq:spectral_error}
        \norm{A-A^\UHm}_2 \le \sqrt{2}\sqrt{\sum_{\ct\in\Lrc}\norm{A_\ct - U_\ct U_\ct^*A_\ct}_2^2 + \sum_{\cs\in\Lcc}\norm{A^*_\cs -V_\cs V_\cs^*A^*_\cs}_2^2}\,.
    \end{equation}
    Suppose the local errors are scaled according to
    \begin{equation}\label{eq:spectral_error_local}
        \norm{A_\ct - U_\ct U_\ct^*A_\ct}_2 = \dfrac{\epsilon}{2}\sqrt{\dfrac{|\ct|\cdot|\mathcal{F}(\ct)|}{|I|\cdot|J|}} \;\text{and}\; \norm{A^*_\cs -V_\cs V_\cs^*A^*_\cs}_2 = \dfrac{\epsilon}{2}\sqrt{\dfrac{|\cs|\cdot|\mathcal{F}(\cs)|}{|I|\cdot|J|}}
    \end{equation}
    for all $\ct\in\Lrc$, $\cs\in\Lcc$. The global error then becomes
    \begin{equation}\label{eq:spectral_error_global}
        \norm{A-A^\UHm}_2 \le \epsilon.
    \end{equation}
\end{theorem}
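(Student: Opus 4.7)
The plan is to mimic the blockwise decomposition used for the Frobenius bound preceding the theorem, but carry it out in the spectral norm. On each admissible block $b=\ts$, write
\begin{equation*}
\restr{A}{b} - U_\ct U_\ct^*\restr{A}{b}V_\cs V_\cs^* = (I-U_\ct U_\ct^*)\restr{A}{b} + U_\ct U_\ct^*\bigl(\restr{A}{b}-\restr{A}{b}V_\cs V_\cs^*\bigr) =: \restr{E_U}{b} + \restr{E_V}{b},
\end{equation*}
and extend $E_U,E_V$ by zero on the dense blocks, so that $A-A^\UHm = E_U+E_V$. The triangle inequality together with $(a+b)^2 \le 2(a^2+b^2)$ gives $\|A-A^\UHm\|_2 \le \sqrt{2\bigl(\|E_U\|_2^2 + \|E_V\|_2^2\bigr)}$, which already accounts for the $\sqrt{2}$ in \eqref{eq:spectral_error}.

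The main obstacle is to prove $\|E_U\|_2^2 \le \sum_{\ct\in\Lrc}\|A_\ct - U_\ct U_\ct^* A_\ct\|_2^2$: unlike the Frobenius norm, spectral norms do not decouple additively over overlapping blocks. I would write $E_U=\sum_{\ct\in\Lrc} M_\ct$ with $M_\ct$ the embedding of $(I-U_\ct U_\ct^*)A_\ct$ padded by zero rows and columns, so $\|M_\ct\|_2 = \|A_\ct - U_\ct U_\ct^* A_\ct\|_2$. The structural fact that saves us is a dichotomy: for any distinct $\ct,\ct'\in\Lrc$, either they are incomparable in $\tree_I$ and then $\ct\cap\ct'=\emptyset$ (so $M_\ct,M_{\ct'}$ have disjoint row supports), or one is contained in the other and the partition property of $P^+_\ItJ$ forces $\mathcal{F}(\ct)\cap\mathcal{F}(\ct')=\emptyset$ (so $M_\ct,M_{\ct'}$ have disjoint column supports). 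The nested case is the non-obvious one: any $j\in\mathcal{F}(\ct)\cap\mathcal{F}(\ct')$ with $\ct'\subsetneq\ct$ would place some $(i,j)$, $i\in\ct'$, inside two distinct admissible leaves at once.

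Combining this dichotomy with the elementary fact that two matrices with disjoint row or column supports satisfy $\|M_1+M_2\|_2^2 \le \|M_1\|_2^2+\|M_2\|_2^2$ (triangle inequality plus Cauchy--Schwarz on $(\|M_1\|_2\|v|_{C_1}\|_2+\|M_2\|_2\|v|_{C_2}\|_2)^2$), I would induct upward through $\tree_I$ on the partial sums $E_\ct:=\sum_{\ct'\in\Lrc,\,\ct'\subseteq\ct}M_{\ct'}$. At a non-leaf $\ct$ with children $\ct_1,\dots,\ct_s$, the $E_{\ct_k}$ have pairwise disjoint row supports and therefore $\|\sum_k E_{\ct_k}\|_2^2 \le \sum_k\|E_{\ct_k}\|_2^2$; and if $\ct\in\Lrc$, then $M_\ct$'s column support $\mathcal{F}(\ct)$ is disjoint from that of $\sum_k E_{\ct_k}$ by the nested case of the dichotomy, extending the bound to $E_\ct$. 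Taking $\ct=I$ yields $\|E_U\|_2^2\le\sum_{\ct\in\Lrc}\|M_\ct\|_2^2$. The identical argument on $\tree_J$, using $\|U_\ct U_\ct^*\|_2=1$ to discard the outer projector from $E_V$, bounds $\|E_V\|_2^2$ and delivers \eqref{eq:spectral_error}.

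For \eqref{eq:spectral_error_global}, substitute \eqref{eq:spectral_error_local} into \eqref{eq:spectral_error}. Since distinct $\cs\in\row{\ct}$ are themselves pairwise disjoint (same partition argument), $|\mathcal{F}(\ct)|=\sum_{\cs\in\row{\ct}}|\cs|$, so
\begin{equation*}
\sum_{\ct\in\Lrc}|\ct|\cdot|\mathcal{F}(\ct)| = \sum_{b\in P^+_\ItJ}|b| \le |I|\cdot|J|,
\end{equation*}
the last inequality because the admissible leaves form a subpartition of $\ItJ$. Together with the column-side analogue, this gives both $\sum_{\ct}\|A_\ct - U_\ct U_\ct^* A_\ct\|_2^2\le\epsilon^2/4$ and $\sum_{\cs}\|A_\cs^* - V_\cs V_\cs^* A_\cs^*\|_2^2\le\epsilon^2/4$, so \eqref{eq:spectral_error} yields $\sqrt{2(\epsilon^2/4+\epsilon^2/4)}=\epsilon$.
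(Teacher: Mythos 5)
Your proof is correct, and it takes a genuinely different route from the paper's. The paper works through the variational characterization $\norm{A-A^\UHm}_2 = \sup_{\|\vu\|=\|\vv\|=1}|\langle(A-A^\UHm)\vu,\vv\rangle|$, re-sorting the block-wise scalar products into one sum indexed by $\Lrc$ and one by $\Lcc$, bounding each by $\norm{A_\ct-U_\ct U_\ct^*A_\ct}\,\norm{\vu_{\mathcal{F}(\ct)}}\,\norm{\vv_\ct}$ (and the column analogue), and then applying Cauchy--Schwarz; the factor $\sqrt{2}$ arises from the inequality $\sum_{\ct}\norm{\vu_{\mathcal{F}(\ct)}}^2\norm{\vv_\ct}^2 + \sum_{\cs}\norm{\vu_\cs}^2\norm{\vv_{\mathcal{F}(\cs)}}^2 \le 2\norm{\vu}^2\norm{\vv}^2$, obtained by refining every block to leaf level. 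You instead split $A-A^\UHm = E_U+E_V$ as operators, pay $\sqrt{2}$ up front via the triangle inequality, and then control $\|E_U\|_2$ (and by symmetry $\|E_V\|_2$) with a structural dichotomy on $\tree_I$: the padded cluster error matrices $M_\ct$ have disjoint row supports when the clusters are incomparable and disjoint column supports when they are nested (this is the nontrivial observation, proved via disjointness of distinct admissible leaves), which together with the elementary lemma for disjointly-supported matrices and an upward induction on $\tree_I$ gives $\|E_U\|_2^2\le\sum_\ct\|M_\ct\|_2^2$. Both routes yield the identical bound \eqref{eq:spectral_error}, and your derivation of \eqref{eq:spectral_error_global} from \eqref{eq:spectral_error_local} via $\sum_\ct|\ct|\cdot|\mathcal{F}(\ct)|\le|I|\cdot|J|$ matches the paper's. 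What your argument buys is that it avoids the variational formulation entirely and makes explicit which structural feature of the block partition (disjointness of the sets $\{\ct\times\mathcal{F}(\ct)\}_\ct$, viewed as a dichotomy on the cluster tree) is responsible for the additive decoupling of spectral norms; the paper's argument generalizes more readily to other operator norms since it never invokes row/column support disjointness at the matrix level.
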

\begin{proof}
The norm $\norm{A-A^\UHm}_2$ is the supremum of the absolute value of the scalar product $\langle (A-A^\UHm)\vu,\vv\rangle$ over all $\vu\in\C^J$, $\vv\in\C^I$ with $\norm{\vu}_2=\norm{\vv}_2=1$. In the following, the subscript is omitted in the norms for readability, i.e.\ $\norm{\cdot}:=\norm{\cdot}_2$ and the definition $b:=\ts$ is not repeated. Using the scalar products between the restrictions $\vu_\cs:=\restr{\vu}{\cs}\in\C^\cs$ and $\vv_\ct:=\restr{\vv}{\ct}\in\C^\ct$, the global scalar product is
\begin{equation*}
    \left\langle (A-A^\UHm)\vu,\vv \right\rangle = \sum_{b\in P^+_\ItJ}\big\langle (\restr{A}{b}-U_\ct U_\ct^* \restr{A}{b} V_\cs V_\cs^*)\vu_\cs, \vv_\ct \big\rangle.
\end{equation*}
Decomposing the error in each block as
\begin{equation*}
\begin{aligned}
    \restr{A}{b} - U_\ct U_\ct^* \restr{A}{b} V_\cs V_\cs^* &= \restr{A}{b} - U_\ct U_\ct^* \restr{A}{b} + U_\ct U_\ct^* \restr{A}{b} - U_\ct U_\ct^* \restr{A}{b} V_\cs V_\cs^* \\ &= (\restr{A}{b} - U_\ct U_\ct^* \restr{A}{b}) + U_\ct U_\ct^*(\restr{A}{b} - \restr{A}{b} V_\cs V_\cs^*)
\end{aligned}
\end{equation*}
and filling it back into the scalar products gives
\begin{equation*}
\begin{aligned}
    \left\langle (A-A^\UHm)\vu,\vv \right\rangle &= \sum_{b\in P^+_\ItJ}\big\langle (\restr{A}{b} - U_\ct U_\ct^* \restr{A}{b})\vu_\cs, \vv_\ct \big\rangle \\
    &+ \sum_{b\in P^+_\ItJ}\big\langle U_\ct U_\ct^*(\restr{A}{b} - \restr{A}{b} V_\cs V_\cs^*)\vu_\cs, \vv_\ct \big\rangle.
\end{aligned}
\end{equation*}
The first sum works out to
\begin{equation*}
\begin{aligned}
    \sum_{b\in P^+_\ItJ}\big\langle (\restr{A}{b} - U_\ct U_\ct^* \restr{A}{b})\vu_\cs, \vv_\ct \big\rangle &= \sum_{\ct\in\Lrc}\sum_{\cs\in\row{\ct}}\big\langle (\restr{A}{b} - U_\ct U_\ct^* \restr{A}{b})\vu_\cs, \vv_\ct \big\rangle \\
    &= \sum_{\ct\in\Lrc} \big\langle (A_\ct - U_\ct U_\ct^* A_\ct)\vu_{\mathcal{F}(\ct)}, \vv_\ct \big\rangle \\
\end{aligned}
\end{equation*}
and the second to
\begin{equation*}
\begin{aligned}
    \sum_{b\in P^+_\ItJ}\big\langle U_\ct U_\ct^*(\restr{A}{b} - \restr{A}{b} V_\cs V_\cs^*)\vu_\cs, \vv_\ct \big\rangle &= \sum_{b\in P^+_\ItJ}\big\langle \vu_\cs, (\restr{A}{b}^* - V_\cs V_\cs^*\restr{A}{b}^*)U_\ct U_\ct^*\vv_\ct \big\rangle \\
    &= \sum_{\cs\in\Lcc}\sum_{\ct\in\col{\cs}}\big\langle \vu_\cs, (\restr{A}{b}^* - V_\cs V_\cs^*\restr{A}{b}^*)U_\ct U_\ct^*\vv_\ct \big\rangle \\
    &= \sum_{\cs\in\Lcc}\big\langle \vu_\cs, (A^*_\cs - V_\cs V_\cs^*A^*_\cs)\mathbf{U}_\cs \mathbf{U}_\cs^*\vv_{\mathcal{F}(\cs)} \big\rangle \\
\end{aligned}
\end{equation*}
where $\mathbf{U}_\cs$ is defined as the orthogonal matrix
$$ \mathbf{U}_\cs := \left(\begin{array}{ccc}
    U_{\ct_1} & & \\ & \ddots & \\ && U_{\ct_\nu}
\end{array}\right) \quad\text{with}\quad \col{\cs}:=\{\ct_1,\dots,\ct_\nu\},~\nu:=\nu(\cs)=|\col{\cs}|.$$
At this point we take absolute values such that
\begin{equation*}
\begin{aligned}
    \left|\left\langle (A-A^\UHm)\vu,\vv \right\rangle\right| &
    \le \sum_{\ct\in\Lrc} \left|\left\langle (A_\ct - U_\ct U_\ct^* A_\ct)\vu_{\mathcal{F}(\ct)}, \vv_\ct \right\rangle\right| \\
        &\quad\quad\quad+ \sum_{\cs\in\Lcc}\left|\left\langle \vu_\cs, (A^*_\cs - V_\cs V_\cs^*A^*_\cs)\mathbf{U}_\cs \mathbf{U}_\cs^*\vv_{\mathcal{F}(\cs)} \right\rangle\right| \\
    &\le \sum_{\ct\in\Lrc} \norm{A_\ct - U_\ct U_\ct^* A_\ct}\norm{\vu_{\mathcal{F}(\ct)}}\norm{\vv_\ct} \\
        &\quad\quad\quad+ \sum_{\cs\in\Lcc} \norm{(A^*_\cs - V_\cs V_\cs^*A^*_\cs)\mathbf{U}_\cs \mathbf{U}_\cs^*}\norm{\vu_\cs}\norm{\vv_{\mathcal{F}(\cs)}} \\
    &\le \sum_{\ct\in\Lrc} \norm{A_\ct - U_\ct U_\ct^* A_\ct}\norm{\vu_{\mathcal{F}(\ct)}}\norm{\vv_\ct} \\
        &\quad\quad\quad+ \sum_{\cs\in\Lcc} \norm{A^*_\cs - V_\cs V_\cs^*A^*_\cs}\norm{\vu_\cs}\norm{\vv_{\mathcal{F}(\cs)}}.
\end{aligned}
\end{equation*}
Next we apply the Schwarz inequality
\begin{multline*}
    \left|\left\langle (A-A^\UHm)\vu,\vv \right\rangle\right|^2 \le \left[ \sum_{\ct\in\Lrc} \norm{A_\ct - U_\ct U_\ct^* A_\ct}^2 + \sum_{\cs\in\Lcc} \norm{A^*_\cs - V_\cs V_\cs^*A^*_\cs}^2\right] \\ 
    \cdot\left[ \sum_{\ct\in\Lrc}\norm{\vu_{\mathcal{F}(\ct)}}^2\norm{\vv_\ct}^2 + \sum_{\cs\in\Lcc}\norm{\vu_\cs}^2\norm{\vv_{\mathcal{F}(\cs)}}^2 \right]
\end{multline*}
where
\begin{equation*}
\begin{aligned}
&\sum_{\ct\in\Lrc}\norm{\vu_{\mathcal{F}(\ct)}}^2\norm{\vv_\ct}^2 + \sum_{\cs\in\Lcc}\norm{\vu_\cs}^2\norm{\vv_{\mathcal{F}(\cs)}}^2 \\
	&\quad\quad\quad\quad\quad\le 2\sum_{b\in P_\ItJ}\norm{\vu_\cs}^2\norm{\vv_\ct}^2  =2\sum_{b\in P_\ItJ}\left[\sum_{\cs'\in\leaves(\cs)}\norm{\vu_{\cs'}}^2\right] \left[\sum_{\ct'\in\leaves(\ct)}\norm{\vv_{\ct'}}^2\right] \\
    &\quad\quad\quad\quad\quad=2\left[\sum_{\cs'\in\leaves(\tree_J)}\norm{\vu_{\cs'}}^2\right] \left[\sum_{\ct'\in\leaves(\tree_I)}\norm{\vv_{\ct'}}^2\right]  = 2\norm{\vu}^2\norm{\vv}^2.
\end{aligned}
\end{equation*}
Here we used $\leaves(\ct):=\leaves(\tree_\ct)$ to indicate the leaves of the subtree of $\tree_I$ with $\ct$ as root and similarly for $\leaves(\cs)$.
This eventually shows
\begin{equation*}
    \left|\left\langle (A-A^\UHm)\vu,\vv \right\rangle\right| = \sqrt{2}\sqrt{\sum_{\ct\in\Lrc} \norm{A_\ct - U_\ct U_\ct^* A_\ct}^2 + \sum_{\cs\in\Lcc} \norm{A^*_\cs - V_\cs V_\cs^*A^*_\cs}^2}\norm{\vu}\norm{\vv},
\end{equation*}
proving the inequality in \cref{eq:spectral_error}. The estimate \cref{eq:spectral_error_global} follows from \cref{eq:spectral_error_local} since $\{\ct\times\mathcal{F}(\ct)\}_{\ct\in\Lrc}$ and $\{\mathcal{F}(\cs)\times\cs\}_{\cs\in\Lcc}$ are collections of disjoint subsets of $\ItJ$.
\end{proof}
\begin{remark}
    Comparing the result of \cref{thm:spectral_error} with \cite[Remark 6.27]{Hackbusch2015book}, the former has an extra factor $1/2$ in the choice of local errors \cref{eq:spectral_error_local} because the errors due to projection onto the row cluster basis and column cluster basis are treated simultaneously. In practice, these projection errors as well as the error of the initial $\Hm$-matrix approximation should be taken into account as a whole.
\end{remark}

\section{Numerical experiments}\label{sec:6_exp}

Our aim is to assess the usefulness of the uniform $\Hm$-matrix format. To that end, we numerically investigate the compression properties of the $\UHm$-matrix format using \cref{alg:unfConstruction} proposed at the end of \cref{sec:4_alg}. The experiments are performed using the open-source \CC~package \href{https://gitlab.kuleuven.be/numa/software/beachpack}{BEACHpack}.\footnote{The code used to produce the results of the numerical experiments can be found at \href{https://gitlab.kuleuven.be/numa/software/beachpack/-/tree/kb-exp-paper-uhmatrix?ref_type=tags}{gitlab.kuleuven.be/numa/software/beachpack/-/tree/kb-exp-paper-uhmatrix}.}

\subsection{Experimental set-up}\label{ssec:6.1_setup}
The matrices under consideration in the numerical experiments originate from the boundary integral operator with kernel function 
\begin{equation*}
    g(\x,\y) = \dfrac{\mathrm{e}^{i \kappa {\|\x-\y\|}_2}}{4\pi{\|\x-\y\|}_2}.
\end{equation*}
This corresponds to the single-layer boundary operator of the 3D Helmholtz equation with wavenumber $\kappa$. The operator is discretized using discontinuous Galerkin BEM on a mesh of $N_t$ triangles, approximating the boundary $\Gamma$. In this case, $\mathscr{V}_N=\mathscr{W}^*_M:=\mathrm{span}\{\phi_i^{\ell}~|~(i,\ell) \in \{1,\dots,N_t\} \times \{1,2,3\} \}$. The $i$ in basis function $\phi_i^{\ell}$ refers to the only mesh triangle in which it is non-zero. Triangle $i$ has three such linear basis functions, each attaining the value $1$ in one of the three vertices and zero in the others. The function $\phi_i^{\ell}$ attains $1$ in the $\ell$th vertex. How this discretization is used to efficiently solve the integral equation, as well as implementation details on the code, are discussed in the forthcoming paper \cite{DirckxPrep}. The elements of the matrix are four-dimensional integrals over triangle pairs. Tensor-Gauss quadrature at orders $3$, $4$, $4$ and $5$ is employed with transformations to remove singularities (see \cite{SauterSchwab2011book} for details).

We consider the sphere and six additional shapes which we refer to as \emph{trefoil knot}, \emph{submarine}, \emph{crankshaft}, \emph{frame}, \emph{falcon} and \emph{lathe part} (see \cref{fig:meshes}). The sphere, trefoil knot and submarine are used at different levels of refinement.

\begin{figure}[tbp]
\input{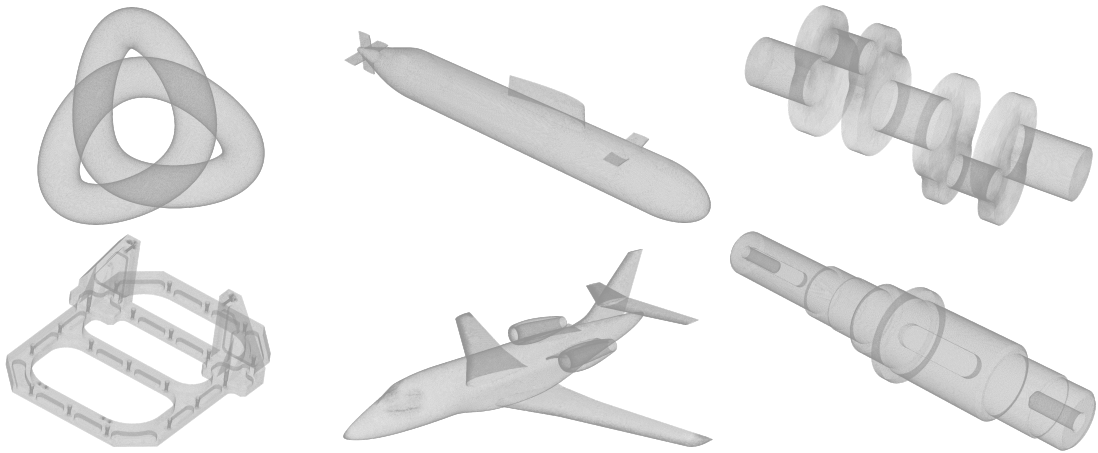}\caption{Shapes used in the numerical experiments. From left to right and from top to bottom: trefoil knot (\SI{548870}{} triangles), submarine (\SI{400886}{} triangles), crankshaft (\SI{672128}{} triangles), frame (\SI{451664}{} triangles), falcon (\SI{609104}{} triangles) and lathe part (\SI{535054}{} triangles).}\label{fig:meshes}
\end{figure}

For the cluster tree $\tree_I=\tree_J$, PCA is employed where clusters are split according to cardinality. A minimal leaf size of $30$ is chosen. Block cluster tree $\tree_{\ItI}$ is built with subdivision strategy $s(\ts):=s(\ct)\times s(\cs)$. Admissibility is based on the weaker geometric admissibility criterion \cref{eq:weak_adm} using approximate diameters and distances based on axis-aligned bounding boxes. As $g$ is symmetric, so is the matrix after discretization; thus only admissible blocks below the diagonal are considered and each $\ct\in\tree_I$ serves both as row and column cluster in accordance with \cref{rmk:symmetry}.

For construction, block-wise approximation is achieved using ACA with \emph{rook pivoting} (ACA\symrook, see \cite[\S3.2]{Dirckx2024} for more details). Relative tolerances are used for both ACA and the uniform compression.\footnote{Note that ACA relies on a heuristic for its error, while in the SVD of the uniform compression the exact rank can be chosen, given a specified relative tolerance on the spectral norm error.} Given a specified tolerance $\epsilon$, the $\Hm$-matrix is constructed using $\epsilon$ as relative tolerance in ACA. To obtain (at least) similar accuracy, the $\UHm$-matrix uses a tolerance of $\epsilon/3$ for ACA and uniform compression. Algebraic recompression of the blocks is performed with $\epsilon/10$ in both cases. Unless specified otherwise, $\epsilon$ is set to $10^{-4}$ in the experiments.

\begin{remark}[Relative global error]\label{rmk:rel-global-err}
    If one expects matrix $A$ to be ``norm-\newline balanced" in the sense that any submatrix $\restr{A}{\ts}$ has spectral norm ${\|\restr{A}{\ts}\|}_2 \sim {\|A\|}_2\sqrt{{|\ct|\cdot|\cs|}/{|I|\cdot|J|}}$, then \cref{thm:spectral_error} and the result in \cite[Lemma 6.26]{Hackbusch2015book} show that using a relative local tolerance $\epsilon$ results in a relative global error ${\|A-A^{\UHm}\|}_2\lesssim {\|A\|}_2\epsilon$.
\end{remark}

All experiments are performed on a compute node consisting of 2 Intel Xeon Platinum 8360Y (Ice Lake) CPUs with 36 cores each, one L3 cache per CPU and 256 GiB total RAM.\footnote{The compute node is part of the `wICE' cluster provided by the VSC (Flemish Supercomputer Center), funded by the Research Foundation -- Flanders (FWO) and the Flemish Government.} Parallel construction and matrix-vector multiplication is achieved according to \cref{ssec:4.4_parallel}. Construction timings are performed 15 times while matrix-vector multiplication timings are run 500 times using three instances of the $\Hm$/$\UHm$-matrix, for a total of 1500 runs. If not specified, the mean time is reported.

\subsection{Sharpness of the admissibility criterion}
The sparsity constant $\csp$ has a significant impact on the attainable gains in $\UHm$-matrix compression. While $\csp$ is bounded independently of the matrix size, this bound is affected by several parameters. The bound based on geometrical observations in \cite{Bebendorf2008book} depends on the geometry of the boundary and the local basis functions defined over it as well as on $\eta$ from the admissibility criterion \cref{eq:strong_adm} or \cref{eq:weak_adm}. Intuitively, a larger value of $\eta$ results in larger but fewer blocks being identified as admissible, thus decreasing $\csp$.

\Cref{fig:numexp-eta} shows the memory usage of both the $\Hm$-matrix and $\UHm$-matrix as a function of $\eta$ for the trefoil knot and the crankshaft at wavenumbers defined relative to the largest triangle side-length $h$ in each shape.\footnote{The values of $\eta$ in \cref{fig:numexp-eta} are chosen relatively large. This compensates for the overestimation of the proximity of clustering using bounding-box-based distances in the admissibility criterion.}

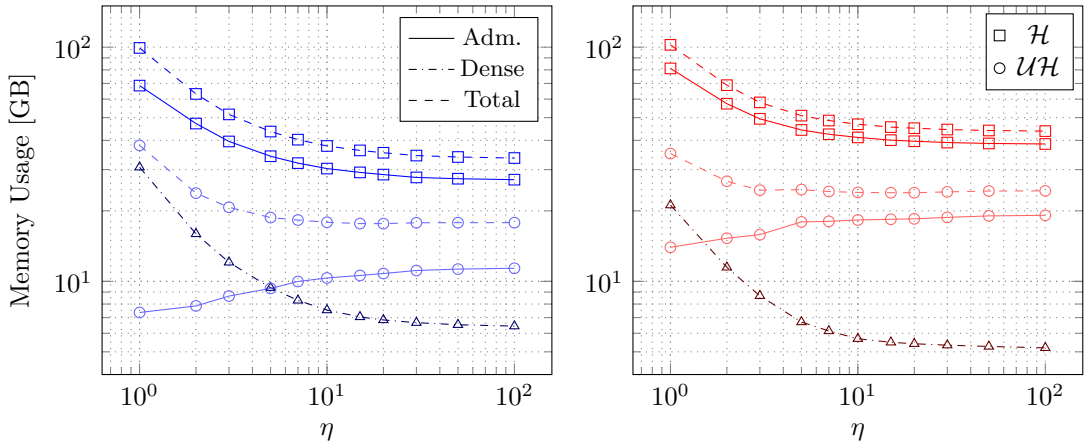
\begin{figure}[tbp]
\subfloat{
    \begin{tikzpicture}[scale=1]
    \pgfplotstableread{M166520-data/fig2/trefoil/memory_mean_H.dat}{\Hdata} 
    \pgfplotstableread{M166520-data/fig2/trefoil/memory_mean_U.dat}{\Udata}     
    \begin{loglogaxis}[width=0.5\linewidth, 
                    xlabel={$\eta$}, 
                    ylabel={Memory Usage [GB]},
                    ymin = 4, ymax = 2.2e2,
                    grid=both,
                    grid style = {dotted,gray},
                    legend pos=north east,
                    legend style = {font=\small}, 
                    legend columns=1]
                    
        \addplot[solid] coordinates {(10,25)};
        \addplot[dashdotted] coordinates {(10,25)};
        \addplot[dashed] coordinates {(10,25)};

        \addplot [blue,mark=square,mark size=1.5pt] table 
            [skip first n=1, x index=0, y expr=\thisrowno{1}/1e9] 
            {\Hdata};
        \addplot [blue!40!black,dashdotted,mark=triangle,mark options=solid,mark size=1.5pt] table 
            [skip first n=1, x index=0, y expr=\thisrowno{2}/1e9] 
            {\Hdata};
        \addplot [blue,dashed,mark=square,mark options=solid,mark size=1.5pt] table 
            [skip first n=1, x index=0, y expr=\thisrowno{3}/1e9] 
            {\Hdata};
        \addplot [blue!60!white,mark=o,mark options=solid,mark size=1.5pt] table 
            [skip first n=1, x index=0, y expr=\thisrowno{1}/1e9] 
            {\Udata};
        \addplot [blue!60!white,dashed,mark=o,mark options=solid,mark size=1.5pt] table 
            [skip first n=1, x index=0, y expr=\thisrowno{3}/1e9] 
            {\Udata};
    \legend{Adm., Dense, Total}
    \end{loglogaxis}
    \end{tikzpicture}
}
\subfloat{
    \begin{tikzpicture}[scale=1]
    \pgfplotstableread{M166520-data/fig2/shaft/memory_mean_H.dat}{\Hdata} 
    \pgfplotstableread{M166520-data/fig2/shaft/memory_mean_U.dat}{\Udata}     
    \begin{loglogaxis}[width=0.5\linewidth,
                    xlabel={$\eta$},
                    ymin = 4, ymax = 2.2e2,
                    grid=both,
                    grid style = {dotted,gray},
                    legend pos=north east,
                    legend style = {font=\small}, 
                    legend columns=1]

        \addplot[only marks,mark=square] coordinates {(100,100)};
        \addplot[only marks,mark=o] coordinates {(100,100)};
        
        \addplot [red,mark=square,mark size=1.5pt] table 
            [skip first n=1, x index=0, y expr=\thisrowno{1}/1e9] 
            {\Hdata};
        \addplot [red!40!black,dashdotted,mark=triangle,mark options=solid,mark size=1.5pt] table 
            [skip first n=1, x index=0, y expr=\thisrowno{2}/1e9] 
            {\Hdata};
        \addplot [red,dashed,mark=square,mark options=solid,mark size=1.5pt] table 
            [skip first n=1, x index=0, y expr=\thisrowno{3}/1e9] 
            {\Hdata};
        \addplot [red!60!white,mark=o,mark options=solid,mark size=1.5pt] table 
            [skip first n=1, x index=0, y expr=\thisrowno{1}/1e9] 
            {\Udata};
        \addplot [red!60!white,dashed,mark=o,mark options=solid,mark size=1.5pt] table 
            [skip first n=1, x index=0, y expr=\thisrowno{3}/1e9] 
            {\Udata};
    \legend{\;$\Hm$, \;$\UHm$}
    \end{loglogaxis}
    \end{tikzpicture}
}
\caption{Memory usage as a function of $\eta$ for the trefoil knot ($\kappa h = 0.1$, left) and crankshaft ($\kappa h = 0.4$, right) in regular and uniform $\Hm$-matrix format.}\label{fig:numexp-eta}
\end{figure}

In both cases, the total memory usage decreases to an asymptote with increasing $\eta$. If $\eta \to \infty$, this could be understood as considering a block admissible if its clusters' bounding boxes do not overlap. In the case of the $\Hm$-matrix, both the admissible and dense storage benefit from relaxing the admissibility criterion. The dense storage reduces as blocks turn admissible while the admissible storage reduces as children blocks are aggregated into a parent block with less memory usage. The aggregation directly results in a lower sparsity constant. The $\UHm$-matrix format is affected by this to the point where increase of $\eta$ leads to increase of the admissible storage. We observe that the $\UHm$-matrix is less affected by a poor choice of $\eta$, which can be seen as a qualitative advantage as it becomes easier to choose a `good' value of $\eta$.

Based on \cref{fig:numexp-eta}, $\eta=10$ is chosen for the remainder of the experiments.

\subsection{Asymptotics in the problem size}
To verify the log-linear complexity of the compression scheme, it is applied to three sets of iteratively refined meshes. \Cref{fig:numexp-complexity} shows the memory usage and construction time for the trefoil knot over a range of refinements. This is done for the Laplace kernel, obtained by setting $\kappa=0$, as well as for the Helmholtz kernel with $\kappa h=0.3$. Both the sequential and parallel construction times are reported, except for the largest meshes.

\begin{figure}[tbp]
\centering
\subfloat{
    \begin{tikzpicture}[scale=1]
    \pgfplotstableread{M166520-data/fig3/trefoil-laplace/memperdof_mean_H.dat}{\LaplaceHdata}
    \pgfplotstableread{M166520-data/fig3/trefoil-laplace/memperdof_mean_U.dat}{\LaplaceUdata}
    \pgfplotstableread{M166520-data/fig3/trefoil-helmholtz/memperdof_mean_H.dat}{\HelmholtzHdata}
    \pgfplotstableread{M166520-data/fig3/trefoil-helmholtz/memperdof_mean_U.dat}{\HelmholtzUdata}
    \begin{semilogxaxis}[width=0.47\linewidth, 
                        xlabel={$N$}, 
                        ylabel={Memory/DOF [KB]}, 
                        ymin=0,
                        grid=major, 
                        grid style = {dotted,gray},
                        legend pos=north west, 
                        legend style = {font=\small}, 
                        legend columns=1]
        
        \addplot[mark=square,mark size=1.5pt] coordinates {(1e4,41)};
        \addplot[dashed,mark=o,mark options=solid,mark size=1.5pt] coordinates {(1e4,41)};
        
        \addplot [blue,mark=square,mark size=1.5pt] table 
            [skip first n=1, x expr=\thisrowno{0}*3, y expr=\thisrowno{1}/1e3] 
            {\LaplaceHdata};
        \addplot [blue,dashed,mark=o,mark options=solid,mark size=1.5pt] table 
            [skip first n=1, x expr=\thisrowno{0}*3, y expr=\thisrowno{1}/1e3] 
            {\LaplaceUdata};
        \addplot [orange,mark=square,mark size=1.5pt] table 
            [skip first n=1, x expr=\thisrowno{0}*3, y expr=\thisrowno{1}/1e3] 
            {\HelmholtzHdata};
        \addplot [orange,dashed,mark=o,mark options=solid,mark size=1.5pt] table 
            [skip first n=1, x expr=\thisrowno{0}*3, y expr=\thisrowno{1}/1e3] 
            {\HelmholtzUdata};
    \legend{$\Hm$,$\UHm$}
    \end{semilogxaxis}
    \end{tikzpicture}
}
\subfloat{
    \begin{tikzpicture}[scale=1]
    \pgfplotstableread{M166520-data/fig3/trefoil-laplace/construction-time_mean_H.dat}{\LaplaceMeanHdata}
    \pgfplotstableread{M166520-data/fig3/trefoil-laplace/construction-time_mean_U.dat}{\LaplaceMeanUdata}
    \pgfplotstableread{M166520-data/fig3/trefoil-helmholtz/construction-time_mean_H.dat}{\HelmholtzMeanHdata}
    \pgfplotstableread{M166520-data/fig3/trefoil-helmholtz/construction-time_mean_U.dat}{\HelmholtzMeanUdata}

    \pgfplotstableread{M166520-data/fig3/trefoil-laplace-seq/construction-time_mean_H.dat}{\LaplaceSeqMeanHdata}
    \pgfplotstableread{M166520-data/fig3/trefoil-laplace-seq/construction-time_mean_U.dat}{\LaplaceSeqMeanUdata}
    \pgfplotstableread{M166520-data/fig3/trefoil-helmholtz-seq/construction-time_mean_H.dat}{\HelmholtzSeqMeanHdata}
    \pgfplotstableread{M166520-data/fig3/trefoil-helmholtz-seq/construction-time_mean_U.dat}{\HelmholtzSeqMeanUdata}
    
    \begin{loglogaxis}[width=0.47\linewidth, 
                    xlabel={$N$}, 
                    ylabel={Construction time [s]}, 
                    grid=major, 
                    grid style = {dotted,gray},
                    legend pos=north west, 
                    legend style = {font=\small}, 
                    legend columns=1]

        \addplot [blue,mark=square,mark size=1.5pt] table 
            [skip first n=1, x expr=\thisrowno{0}*3, y index=2] {\LaplaceMeanHdata};
        \addplot [blue,dashed,mark=o,mark options=solid,mark size=1.5pt] table 
            [skip first n=1, x expr=\thisrowno{0}*3, y index=2] {\LaplaceMeanUdata};
        \addplot [orange,mark=square,mark size=1.5pt] table 
            [skip first n=1, x expr=\thisrowno{0}*3, y index=2] {\HelmholtzMeanHdata};
        \addplot [orange,dashed,mark=o,mark options=solid,mark size=1.5pt] table 
            [skip first n=1, x expr=\thisrowno{0}*3, y index=2] {\HelmholtzMeanUdata}
            node[above,pos=1,xshift=-10,yshift=-40] {\color{black}\footnotesize$p=72$};

        \addplot [blue,mark=square,mark size=1.5pt] table 
            [skip first n=1, x expr=\thisrowno{0}*3, y index=2] {\LaplaceSeqMeanHdata};
        \addplot [blue,dashed,mark=o,mark options=solid,mark size=1.5pt] table 
            [skip first n=1, x expr=\thisrowno{0}*3, y index=2] {\LaplaceSeqMeanUdata};
        \addplot [orange,mark=square,mark size=1.5pt] table 
            [skip first n=1, x expr=\thisrowno{0}*3, y index=2] {\HelmholtzSeqMeanHdata};
        \addplot [orange,dashed,mark=o,mark options=solid,mark size=1.5pt] table 
            [skip first n=1, x expr=\thisrowno{0}*3, y index=2] {\HelmholtzSeqMeanUdata}
            node[above,pos=1,xshift=0,yshift=0] {\color{black}\footnotesize$p=1$};

        \addplot [black,dashed,domain=9e3:3e6,samples=100] {2e-2*(x/3)*ln(x/3)}
            node[above,pos=1,xshift=-55,yshift=-10] {\footnotesize$\bigO(N\log N)$};
    \end{loglogaxis}
    \end{tikzpicture}
}
\caption{Memory usage and construction time as a function of $N$ for the trefoil knot in regular and uniform $\Hm$-matrix format at $\kappa h=0$ (blue) and $\kappa h=0.3$ (orange).}\label{fig:numexp-complexity}
\end{figure}
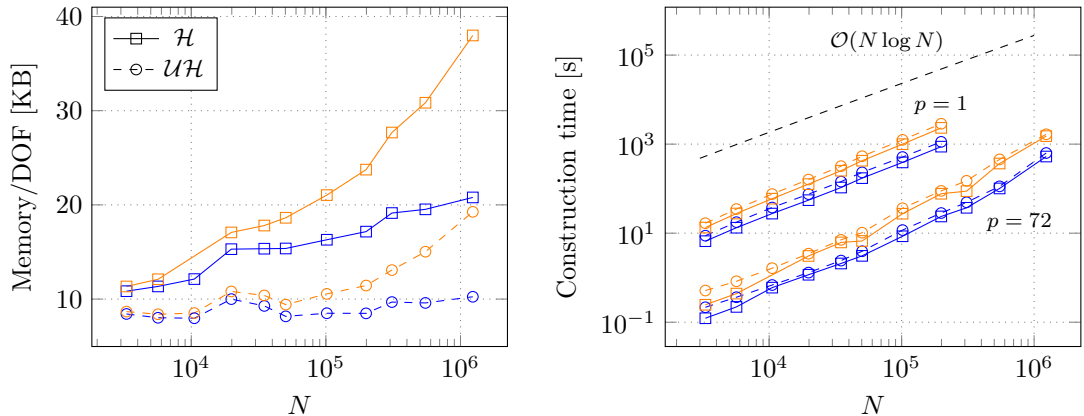

The memory usage of both the $\Hm$-matrix and $\UHm$-matrix clearly has $\bigO(N\log N)$ complexity in the case of the Laplace kernel. Given that $\kappa$ increases with $N$ if $\kappa h \sim 1$, it is not expected that the Helmholtz kernel scales as $\bigO(N\log N)$; however it remains close here. The construction time scales similarly except for the parallel construction at the largest $N$, where additional memory-architecture effects play a role. Noteworthy is the observation that the $\UHm$-matrix construction not only scales with $\bigO(N\log N)$ as expected but also parallelizes as well as the $\Hm$-matrix construction.

\Cref{tab:numexp-complexity} compiles the results of the sphere and the submarine for $\kappa=0$. Similar observations can be made w.r.t.\ the complexity as in \cref{fig:numexp-complexity}. The $\UHm$-matrix is able to attain compression factors of over 2 while the construction time is comparable. \Cref{tab:numexp-complexity} can be compared to \cite[Table 4.2]{Borm2009a} where $\Hm$-matrices are compressed to the $\Hm^2$-matrix format. For larger $N$, the linear storage complexity of $\Hm^2$-matrices results in larger compression ratios than reported here. However, the total construction time still scales log-linearly. Due to the recursive $\Hm^2$-matrix structure, the compression scheme is also much more involved and requires more implementation effort.
\begin{table}[thbp]
	\caption{Memory usage per degree of freedom in KB and construction time (sequential and parallel) in seconds for the increasingly refined sphere (top) and submarine (bottom) at $\kappa=0$.}\label{tab:numexp-complexity}
	\footnotesize
    \centering
    \csvreader[
        head to column names,
        tabular = ccccccc,
        table head = \toprule & \multicolumn{3}{c}{$\mathcal{H}$-matrix} & \multicolumn{3}{c}{$\mathcal{UH}$-matrix} \\ \cmidrule(r){2-4}     \cmidrule(r){5-7} {$N$} & {$\mathrm{Mem}/N$} & {Build}(1) & {Build}(72) & {$\mathrm{Mem}/N$} & {Build}(1) & {Build}(72) \\ \midrule,
        table foot = \midrule,
    ]{M166520-data/tab1/sphere-table.csv}{}{
        \tablenum[round-precision=0, round-mode=places, table-format=7.0]{\fpeval{3*\NtP}} & 
        \tablenum[round-precision=2, round-mode=places, table-format=2.2]{\fpeval{\memperdofHP/1e3}} & 
        \tablenum[round-precision=2, round-mode=places, exponent-mode=scientific, table-format=1.2e1]{\meanBuildHS} & 
        \tablenum[round-precision=2, round-mode=places, exponent-mode=scientific, table-format=1.2e1]{\meanBuildHP} & 
        \tablenum[round-precision=2, round-mode=places, table-format=2.2]{\fpeval{\memperdofUP/1e3}} & 
        \tablenum[round-precision=2, round-mode=places, exponent-mode=scientific, table-format=1.2e1]{\meanBuildUS} &
        \tablenum[round-precision=2, round-mode=places, exponent-mode=scientific, table-format=1.2e1]{\meanBuildUP}
    }
    \csvreader[
        head to column names,
        tabular = ccccccc,
        table head = \hphantom{$N$} & \hphantom{$\mathrm{Mem}/N$} & \hphantom{Build} & \hphantom{Build} & \hphantom{$\mathrm{Mem}/N$} & \hphantom{Build} & \hphantom{Build} \vspace{-3ex}\\,
        table foot = \bottomrule,
    ]{M166520-data/tab1/submarine-table.csv}{}{
        \tablenum[round-precision=0, round-mode=places, table-format=7.0]{\fpeval{3*\NtP}} & 
        \tablenum[round-precision=2, round-mode=places, table-format=2.2]{\fpeval{\memperdofHP/1e3}} & 
        \tablenum[round-precision=2, round-mode=places, exponent-mode=scientific, table-format=1.2e1]{\meanBuildHS} &    
        \tablenum[round-precision=2, round-mode=places, exponent-mode=scientific, table-format=1.2e1]{\meanBuildHP} & 
        \tablenum[round-precision=2, round-mode=places, table-format=2.2]{\fpeval{\memperdofUP/1e3}} & 
        \tablenum[round-precision=2, round-mode=places, exponent-mode=scientific, table-format=1.2e1]{\meanBuildUS} &
        \tablenum[round-precision=2, round-mode=places, exponent-mode=scientific, table-format=1.2e1]{\meanBuildUP}
    }
\end{table}

\subsection{Behavior with respect to tolerance}

The storage cost and construction time of $\Hm$- and $\UHm$-matrices implicitly depend on the tolerance through the block ranks $(k_b)_{b}$ and cluster ranks $(\ell_\ct)_{\ct}$ and $(\ell_\cs)_{\cs}$. \Cref{fig:numexp-tolerance} shows the memory usage and construction time as a function of the relative tolerance $\epsilon$ for the crankshaft and submarine. Default quadrature orders of $(3,4,4,5)$ were used at precisions $10^{-3}$ and $10^{-4}$, increased to $(4,5,5,6)$ for $\epsilon=10^{-5}$ and decreased to $(2,3,3,4)$ for $\epsilon=10^{-2}$.

\begin{figure}[thbp]
\small
\begin{tikzpicture}
\begin{groupplot}[group style={group size=3 by 2, horizontal sep=0.06\linewidth, vertical sep=0.05\linewidth}]

    \pgfplotstableread{M166520-data/fig4/shaft/memory_mean_H.dat}{\Hdata} 
    \pgfplotstableread{M166520-data/fig4/shaft/memory_mean_U.dat}{\Udata}    

    \pgfplotstableread{M166520-data/fig4/submarine/memory_mean_H.dat}{\betssiHdata} 
    \pgfplotstableread{M166520-data/fig4/submarine/memory_mean_U.dat}{\betssiUdata}     

    \pgfplotstableread{M166520-data/fig4/shaft/memory_ratio.dat}{\Rdata} 
    \pgfplotstableread{M166520-data/fig4/submarine/memory_ratio.dat}{\betssiRdata} 

    \pgfplotstableread{M166520-data/fig4/shaft/construction-time_min_H.dat}{\minHdata} 
    \pgfplotstableread{M166520-data/fig4/shaft/construction-time_min_U.dat}{\minUdata}     
    \pgfplotstableread{M166520-data/fig4/shaft/construction-time_mean_H.dat}{\meanHdata} 
    \pgfplotstableread{M166520-data/fig4/shaft/construction-time_mean_U.dat}{\meanUdata}  

     \pgfplotstableread{M166520-data/fig4/submarine/construction-time_min_H.dat}{\betssiMinHdata} 
    \pgfplotstableread{M166520-data/fig4/submarine/construction-time_min_U.dat}{\betssiMinUdata}     
    \pgfplotstableread{M166520-data/fig4/submarine/construction-time_mean_H.dat}{\betssiMeanHdata} 
    \pgfplotstableread{M166520-data/fig4/submarine/construction-time_mean_U.dat}{\betssiMeanUdata}  

    \pgfplotstableread{M166520-data/fig4/shaft/construction-time_ratio.dat}{\timeRdata} 
    \pgfplotstableread{M166520-data/fig4/submarine/construction-time_ratio.dat}{\timeBetssiRdata} 

    \nextgroupplot[width=0.25\linewidth, 
                    scale only axis,
                    xmode=log,
                    ylabel={Memory Usage [GB]}, 
                    ymin=0, ymax=72,
                    grid=major,
                    grid style = {dotted,gray},
                    legend pos=north east, 
                    legend style = {font=\scriptsize}]

        \addplot[solid] coordinates {(1e-4,25)};
        \addplot[dashdotted] coordinates {(1e-4,25)};
        \addplot[dashed] coordinates {(1e-4,25)};

        \addplot [blue,mark=square,mark size=1.5pt] table 
            [skip first n=1, x index=0, y expr=\thisrowno{1}/1e9] 
            {\Hdata};
        \addplot [blue!50!black,dashdotted,mark size=1.5pt] table 
            [skip first n=1, x index=0, y expr=\thisrowno{2}/1e9] 
            {\Hdata};
        \addplot [blue,dashed,mark=square,mark options=solid,mark size=1.5pt] table 
            [skip first n=1, x index=0, y expr=\thisrowno{3}/1e9] 
            {\Hdata};
        \addplot [blue!50!white,mark=o,mark options=solid,mark size=1.5pt] table 
            [skip first n=1, x index=0, y expr=\thisrowno{1}/1e9] 
            {\Udata};
        \addplot [blue!50!white,dashed,mark=o,mark options=solid,mark size=1.5pt] table 
            [skip first n=1, x index=0, y expr=\thisrowno{3}/1e9] 
            {\Udata};
    \legend{Adm., Dense, Total}

    \nextgroupplot[width=0.25\linewidth, 
                    scale only axis,
                    xmode=log,
                    ymin=0, ymax=35,
                    grid=major,
                    grid style={dotted,gray},
                    legend pos=north east, 
                    legend style = {font=\scriptsize}]

        \addplot[only marks,mark=square] coordinates {(100,100)};
        \addplot[only marks,mark=o] coordinates {(100,100)};
        
        \addplot [red,mark=square,mark size=1.5pt] table 
            [skip first n=1, x index=0, y expr=\thisrowno{1}/1e9] 
            {\betssiHdata};
        \addplot [red!50!black,dashdotted,mark size=1.5pt] table 
            [skip first n=1, x index=0, y expr=\thisrowno{2}/1e9] 
            {\betssiHdata};
        \addplot [red,dashed,mark=square,mark options=solid,mark size=1.5pt] table 
            [skip first n=1, x index=0, y expr=\thisrowno{3}/1e9] 
            {\betssiHdata};
        \addplot [red!50!white,mark=o,mark options=solid,mark size=1.5pt] table 
            [skip first n=1, x index=0, y expr=\thisrowno{1}/1e9] 
            {\betssiUdata};
        \addplot [red!50!white,dashed,mark=o,mark options=solid,mark size=1.5pt] table 
            [skip first n=1, x index=0, y expr=\thisrowno{3}/1e9] 
            {\betssiUdata};
    \legend{\;$\Hm$, \;$\UHm$}

    \nextgroupplot[width=0.25\linewidth, 
                    scale only axis,
                    xmode=log,
                    ymin = 1.7, ymax = 5.9,
                    grid=major, 
                    grid style={dotted,gray},
                    legend pos=north east, 
                    legend style = {font=\scriptsize}]
        \addplot [blue,mark=triangle] table 
            [skip first n=1, x index=0, y index=1] {\Rdata};
        \addplot [red,mark=x, mark size=2.5] table 
            [skip first n=1, x index=0, y index=1] {\betssiRdata};
        \addplot [blue,dashed,mark=triangle,mark options=solid] table 
            [skip first n=1, x index=0, y index=3] {\Rdata};
        \addplot [red,dashed,mark=x,mark options=solid, mark size=2.5] table 
            [skip first n=1, x index=0, y index=3] {\betssiRdata};
    \legend{\emph{crankshaft}, \emph{submarine}}

    \nextgroupplot[width=0.25\linewidth, 
                    scale only axis,
                    xmode=log, ymode=log,
                    ymin = 8, ymax = 650,
                    xlabel={Tolerance $\epsilon$}, 
                    ylabel={Construction Time [s]},
                    grid=major,
                    grid style = {dotted,gray},
                    legend pos=north east, 
                    legend style = {font=\scriptsize}]
                    
        \addplot[solid] coordinates {(1e-4,500)};
        \addplot[dashed] coordinates {(1e-4,500)};
                
        \addplot [blue,mark=square,mark size=1.5pt] table [skip first n=1, x index=0, y index=2] {\meanHdata};
        \addplot [blue,dashed,mark=square,mark options=solid,mark size=1.5pt] table [skip first n=1, x index=0, y index=1] {\minHdata};
        \addplot [blue!50!white,mark=o,mark size=1.5pt] table [skip first n=1, x index=0, y index=2] {\meanUdata};
        \addplot [blue!50!white,dashed,mark=o,mark options=solid,mark size=1.5pt] table [skip first n=1, x index=0, y index=1] {\minUdata};
    \legend{Mean,Minimum}

    \nextgroupplot[width=0.25\linewidth, 
                    scale only axis,
                    xmode=log, ymode=log,
                    ymin = 5, ymax = 350,
                    xlabel={Tolerance $\epsilon$}, 
                    grid=major, 
                    grid style={dotted,gray}]
        \addplot [red,mark=square,mark size=1.5pt] table [skip first n=1, x index=0, y index=2] {\betssiMeanHdata};
        \addplot [red,dashed,mark=square,mark options=solid,mark size=1.5pt] table [skip first n=1, x index=0, y index=1] {\betssiMinHdata};
        \addplot [red!50!white,mark=o,mark size=1.5pt] table [skip first n=1, x index=0, y index=2] {\betssiMeanUdata};
        \addplot [red!50!white,dashed,mark=o,mark options=solid,mark size=1.5pt] table [skip first n=1, x index=0, y index=1] {\betssiMinUdata};

    \nextgroupplot[width=0.25\linewidth, 
                    scale only axis,
                    xmode=log,
                    xlabel={Tolerance $\epsilon$}, 
                    ymin=0.74, ymax=0.86,
                    grid=major, 
                    grid style={dotted,gray},
                    legend pos=north east, 
                    legend style = {font=\small}, 
                    legend columns=2]
        \addplot [blue,mark=triangle] table [skip first n=1, x index=0, y index=2] {\timeRdata};
        \addplot [blue,dashed,mark=triangle,mark options=solid] table [skip first n=1, x index=0, y index=1] {\timeRdata};
        \addplot [red,mark=x, mark size=2.5] table [skip first n=1, x index=0, y index=2] {\timeBetssiRdata};
        \addplot [red,dashed,mark=x,mark options=solid, mark size=2.5] table [skip first n=1, x index=0, y index=1] {\timeBetssiRdata};

\end{groupplot}
\end{tikzpicture}
\caption{Memory usage and parallel construction time as a function of the tolerance $\epsilon$ for the crankshaft and submarine at $\kappa h=0.1$ in regular and uniform $\Hm$-matrix format. Left: Absolute values for the crankshaft. Middle: Absolute values for the submarine. Right: Relative values $\Hm/\UHm$.}\label{fig:numexp-tolerance}
\end{figure}
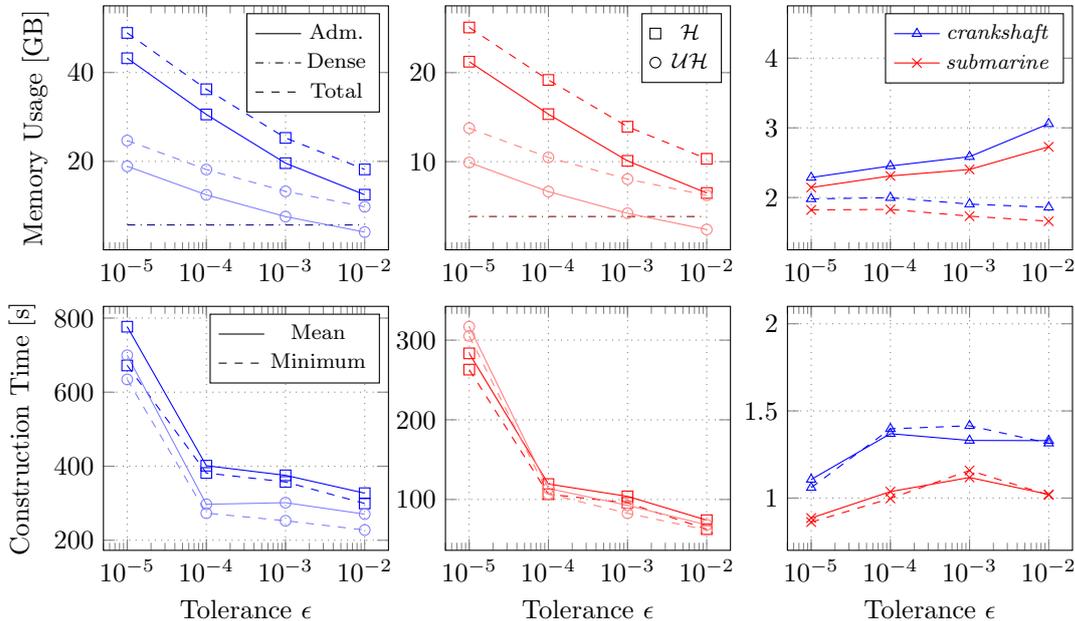

Memory usage of the $\Hm$- and $\UHm$-matrices behaves as $\bigO(\log(1/\epsilon))$, to be expected as asymptotic smoothness of the kernel results in exponentially decaying singular values of admissible blocks. Total compression factors are stable throughout the $\epsilon$-range.

The expected behavior of construcion time w.r.t.\ the tolerance is more complicated as different parts of the construction have rank dependencies of $k$, $k^2$ and $k^3$. Additionally, as the sampling of the matrix elements has a $\bigO(q^4)$ dependence on the quadrature order $q$, this affects the construction time as well, which is very clear at $\epsilon=10^{-2},10^{-5}$. For both shapes, the $\UHm$-matrix construction takes slightly longer, adding an additional 25\% to the runtime on average.

The block- and cluster-wise relative tolerances in the experimental set-up do not guarantee a relative tolerance on the whole matrix. Therefore, \cref{fig:numexp-accuracy} plots the total memory usage against the global relative spectral error, estimated using power iteration, for the crankshaft and submarine. The error is averaged over 15 constructed matrices. The same values for $\epsilon$ are used as in \cref{fig:numexp-tolerance}.

\begin{figure}[thbp]
\small
\centering
\subfloat{
\begin{tikzpicture}[scale=1]
    \pgfplotstableread{M166520-data/fig5/shaft/tol-mem-error_mean_H.dat}{\shaftHdata}
    \pgfplotstableread{M166520-data/fig5/shaft/tol-mem-error_mean_U.dat}{\shaftUdata}
    \pgfplotstableread{M166520-data/fig5/shaft/tol-mem-error_min_H.dat}{\shaftHdataMin}
    \pgfplotstableread{M166520-data/fig5/shaft/tol-mem-error_min_U.dat}{\shaftUdataMin}
    \pgfplotstableread{M166520-data/fig5/shaft/tol-mem-error_max_H.dat}{\shaftHdataMax}
    \pgfplotstableread{M166520-data/fig5/shaft/tol-mem-error_max_U.dat}{\shaftUdataMax}
    \begin{semilogxaxis}[
            width=0.4\linewidth, 
            xlabel={Relative spectral error},  
            ylabel={Memory Usage [GB]},
            xmin=1e-6, xmax=2e-2,
            ymin=0, ymax=70,
            xtick={1e-6,1e-4,1e-2},
            extra x ticks={1e-5,1e-3},
            grid=major,
            grid style = {dotted,gray},
            legend pos=north east,
            legend style = {font=\small}]
                    
        \addplot[mark=square,mark size=1.5pt] coordinates {(100,100)};
        \addplot[dashed,mark=o,mark options=solid,mark size=1.5pt] coordinates {(100,100)};
        
        \addplot [name path=minh, draw=none] table 
            [skip first n=1, x index=2, y expr=\thisrowno{1}/1e9] 
            {\shaftHdataMin};
        \addplot [blue,mark=square,mark size=1.5pt] table 
            [skip first n=1, x index=3, y expr=\thisrowno{1}/1e9] 
            {\shaftHdata};
        \addplot [name path=maxh, draw=none] table 
            [skip first n=1, x index=4, y expr=\thisrowno{1}/1e9] 
            {\shaftHdataMax};
        \addplot[blue,opacity=0.2] fill between [of=minh and maxh];

        \addplot [name path=minu, draw=none] table 
            [skip first n=1, x index=2, y expr=\thisrowno{1}/1e9] 
            {\shaftUdataMin};
        \addplot [blue,dashed,mark=o,mark options=solid,mark size=1.5pt] table 
            [skip first n=1, x index=3, y expr=\thisrowno{1}/1e9] 
            {\shaftUdata};
        \addplot [name path=maxu, draw=none] table 
            [skip first n=1, x index=4, y expr=\thisrowno{1}/1e9] 
            {\shaftUdataMax};
        \addplot[blue,opacity=0.2] fill between [of=minu and maxu];

    \legend{\;$\Hm$, \;$\UHm$}
    \end{semilogxaxis}
\end{tikzpicture}
}
\subfloat{
\begin{tikzpicture}[scale=1]
    \pgfplotstableread{M166520-data/fig5/submarine/tol-mem-error_mean_H.dat}{\betssiHdata}
    \pgfplotstableread{M166520-data/fig5/submarine/tol-mem-error_mean_U.dat}{\betssiUdata}
    \pgfplotstableread{M166520-data/fig5/submarine/tol-mem-error_min_H.dat}{\betssiHdataMin}
    \pgfplotstableread{M166520-data/fig5/submarine/tol-mem-error_min_U.dat}{\betssiUdataMin}
    \pgfplotstableread{M166520-data/fig5/submarine/tol-mem-error_max_H.dat}{\betssiHdataMax}
    \pgfplotstableread{M166520-data/fig5/submarine/tol-mem-error_max_U.dat}{\betssiUdataMax}
    \begin{semilogxaxis}[
            width=0.4\linewidth,
            xlabel={Relative spectral error},
            xmin=1e-6, xmax=2e-2,
            ymin=0, ymax=35,
            xtick={1e-6,1e-4,1e-2},
            extra x ticks={1e-5,1e-3},
            grid=major,
            grid style = {dotted,gray},
            legend pos=north east,
            legend style = {font=\small}]

        \addplot [name path=minh, draw=none] table 
            [skip first n=1, x index=2, y expr=\thisrowno{1}/1e9] 
            {\betssiHdataMin};
        \addplot [red,mark=square,mark size=1.5pt] table 
            [skip first n=1, x index=3, y expr=\thisrowno{1}/1e9] 
            {\betssiHdata};
        \addplot [name path=maxh, draw=none] table 
            [skip first n=1, x index=4, y expr=\thisrowno{1}/1e9] 
            {\betssiHdataMax};
        \addplot[red,opacity=0.2] fill between [of=minh and maxh];

        \addplot [name path=minu, draw=none] table 
            [skip first n=1, x index=2, y expr=\thisrowno{1}/1e9] 
            {\betssiUdataMin};
        \addplot [red,dashed,mark=o,mark options=solid,mark size=1.5pt] table 
            [skip first n=1, x index=3, y expr=\thisrowno{1}/1e9] 
            {\betssiUdata};
        \addplot [name path=maxu, draw=none] table 
            [skip first n=1, x index=4, y expr=\thisrowno{1}/1e9] 
            {\betssiUdataMax};
        \addplot[red,opacity=0.2] fill between [of=minu and maxu];
    \end{semilogxaxis}
\end{tikzpicture}
}
\caption{Total memory usage as a function of the relative spectral error $\|A-A^\bullet\|_2/\|A\|_2$ ($\bullet=\Hm,\UHm$) for the crankshaft (left) and submarine (right) at $\kappa h=0.1$. Lines indicate the mean error while the light bands indicate the minimum and maximum errors.}\label{fig:numexp-accuracy}
\end{figure}

The spectral errors behave mostly as desired, only the $\Hm$-matrices exceed the imposed tolerance $\epsilon$ at times. Using the stricter tolerance $\epsilon/3$, the $\UHm$-matrices consistently achieve lower errors than the $\Hm$-matrices at tolerance $\epsilon$. It shows that the uniform compression may even be more effective than the current set-up suggests.

Besides this experiment, we have also estimated the relative spectral errors across all other numerical tests. Of all the computed errors, none exceeded the imposed $\epsilon$ by more than a factor of $10$. In addition, the $\UHm$-matrices were more accurate than their $\Hm$-matrix counterparts in 97.9\% of cases.

\subsection{Memory reduction in the $\UHm$-matrix format}
Finally, robustness of the uniform compression is tested by applying it to a total of six shapes with varying values for $\kappa h$. The results are collected in \cref{tab:numexp-meshStats}. The $\Hm$-matrix results are given in absolute values while the $\UHm$-matrix results are given in relative terms.
\begin{table}[thbp]
	\caption{Memory usage in GB, and parallel construction time and matrix-vector product timings (MV) in seconds for the $\Hm$- and $\UHm$-matrix of six different shapes.}\label{tab:numexp-meshStats}
	\footnotesize
    \centering
    \setlength{\tabcolsep}{5pt} 
    \csvreader[
        head to column names,
        tabular = >{\em}lccccccccc,
        table head = \toprule& &&\multicolumn{3}{c}{$\mathcal{H}$-matrix} & \multicolumn{4}{c}{Improvement factor $\mathcal{H}/\mathcal{UH}$} \\ \cmidrule(r){4-6}     \cmidrule(r){7-10}\emph{Shape} & {$N$} & {$\kappa h$} & {$\mathrm{Mem_{tot}}$} & {Build} & {MV} & {$\mathrm{Mem_{tot}}$} & {$\mathrm{Mem_{adm}}$} & {Build} & {MV} \\ \midrule,
        table foot = \bottomrule,
    ]{M166520-data/tab2/compression_table.csv}{}{%
        \shape & 
        \tablenum[round-precision=0, round-mode=places, table-format=7.0]{\fpeval{3*\NtH}} & 
        \tablenum[round-precision=3, round-mode=places, table-format=1.3]{\khH} & 
        \tablenum[round-precision=1, round-mode=places, table-format=2.1]{\fpeval{\memtotH/1e9}} & 
        \tablenum[round-precision=0, round-mode=places, table-format=3.0]{\meanBuildH} & 
        \tablenum[round-precision=3, round-mode=places, table-format=1.3]{\meanMatvecH} & 
        \tablenum[round-precision=2, round-mode=places, table-format=1.2]{\fpeval{\memtotH/\memtotU}} & 
        \tablenum[round-precision=2, round-mode=places, table-format=1.2]{\fpeval{\memadmH/\memadmU}} & 
        \tablenum[round-precision=3, round-mode=places, table-format=1.3]{\fpeval{\meanBuildH/\meanBuildU}} & 
        \tablenum[round-precision=2, round-mode=places, table-format=1.2]{\fpeval{\meanMatvecH/\meanMatvecU}} 
    }
\end{table}
For all shapes, the $\UHm$-matrix format is able to compress the admissible memory with a factor between $2.5$ and $4$. As a result, the total memory usage sees a reduction around $2.4$. This is similar to the factors that can be inferred from the previous experiments. 

The additional construction cost is manageable. In the worst case (of the frame), uniform compression increases the total construction time by 61\%. The $\UHm$-matrix construction time may be reduced by opting for e.g.\ column-pivoted QR with early-exit as discussed in \cref{rmk:rr-decomp}.

The memory reduction positively influences the performance of the matrix-vector products. For both formats, these are done in parallel (see \cref{alg:hmatvec,alg:uhmatvec}). While these factors are lower than the actual compression, they still indicate that the parallelizability of the $\UHm$-matrix-vector product is comparable -- at least in a simple parallel implementation -- to that of the regular $\Hm$-matrix.

\section{Conclusion and future work}\label{sec:7_conclusion}
We have shown that $\Hm$-matrices can be cheaply compressed to the uniform $\Hm$-matrix format in an algebraic manner. This more than halves the total memory usage in the process. By incorporating the initial $\Hm$-matrix construction into the compression, a $\UHm$-matrix can be constructed directly without the need to store the intermediate $\Hm$-matrix. Using this approach, the construction time of the $\UHm$-matrix is comparable to that of the $\Hm$-matrix. The more compact $\UHm$-matrix also results in a faster matrix-vector product.

Future work include experiments on a wider variety of matrices, among which different BEM matrices and a direct comparison with the $\Hm^2$-matrix compression. Additionally, we plan to apply the ideas of the uniform compression to the compact representation for wavenumber-dependent BEM matrices in \cite{Dirckx2022}.

\bibliographystyle{siamplain}
\bibliography{M166520-ref}

\end{document}